\DeclareMathAlphabet{\mathpzc}{OT1}{pzc}{m}{it}
\newcommand{\ue}{\textup{\textsf{u}}}
\newcommand{\Lp}{{L^p(\Omega)}}
\newcommand{\bLp}{{\mathbf{L}^p(\Omega)}}
\newcommand{\BMO}{{\textup{\textsf{BMO}}(\Omega)}}
\newcommand{\bmo}{{\textup{\textsf{BMO}}}}
\newcommand{\vare}{{\varepsilon}}
\newcommand{\DIV}{\nabla\!{\cdot}}   
\newcommand{\GRAD}{\nabla}           
\newcommand{\fe}{\textup{\textsf{f}}}
\newcommand{\Real}{\mathbb R}
\newcommand{\polM}{{\mathbb M}}
\newcommand{\polN}{{\mathbb N}}
\newcommand{\polP}{{\mathbb P}}
\newcommand{\polQ}{{\mathbb Q}}
\newcommand{\polV}{{\mathbb V}}
\newcommand{\polX}{{\mathbb X}}
\newcommand{\ie}{i.e.,\@\xspace}
\def\scl{\left\langle}
\def\scr{\right\rangle}
\def\Ldeuxd{{{ \bf L}^2   (\Omega)}}
\newcommand{\diff}{\, \mbox{\rm d}}
\newcommand{\calM}{{\mathcal M}}
\def\Ldeux{{{  L}^2   (\Omega)}}
\newcommand{\calT}{{\mathcal T}}
\def\tr{^\intercal}
\newcommand{\calL}{{\mathcal L}}
\newcommand{\calR}{{\mathcal R}}
\newcommand{\calS}{{\mathcal S}}
\newcommand{\hu}{{\widehat{u}}}
\newcommand{\hp}{{\hat{p}}}
\newcommand{\hq}{{\hat{q}}}
\newcommand{\bell}{{\boldsymbol \ell}}
\newcommand{\frakh}{{\mathfrak h}}
\newcommand{\bL}{{\bf L}}
\newcommand{\bg}{{\bf g}}
\newcommand{\bh}{{\bf h}}
\newcommand{\bq}{{\bf q}}
\newcommand{\bs}{{\bf s}}
\newcommand{\bw}{{\bf w}}
\newtheorem{remark}[theorem]{Remark}
\numberwithin{equation}{section}
\title{Approximation of elliptic equations with $\bmo$ coefficients\thanks{AJS is partially supported by NSF grant DMS-1418784.}}
\author{Harbir Antil\thanks{Department of Mathematical Sciences, George Mason University, Fairfax, VA 222030, USA \texttt{hantil@gmu.edu}},
\and
Abner J.~Salgado\thanks{Department of Mathematics, University of Tennessee, Knoxville, TN 37996, USA. \texttt{asalgad1@utk.edu}}}
\date{draft of \today.}
\begin{document}

\maketitle
\begin{abstract}
We study solution techniques for elliptic equations in divergence form, where the coefficients are only of bounded mean oscillation ($\bmo$). For $|p-2|<\vare$ and a right hand side in $W^{-1}_p$ we show convergence of a finite element scheme, where $\vare$ depends on the oscillation of the coefficients.
\end{abstract}

\begin{keywords}
Elliptic equations; bounded mean oscillation; finite elements; convergence.
\end{keywords}

\begin{AMS}
65N12,   
65N15,   
65N30,   
42B35,   
42B37,   
35R05,   
35D30,   
\end{AMS}

\section{Introduction}
\label{sec:introduccion}

In this work we are interested in the analysis of the convergence properties of a standard finite element scheme for the approximation of a linear elliptic boundary value problem in divergence form
\begin{equation}
\label{eq:bvp}
  \DIV \left( A(x) \GRAD \ue \right) = \DIV \fe \quad \text{in } \Omega, \qquad \ue_{|\partial\Omega} = 0,
\end{equation}
where $\Omega \subset \Real^d$ with $d\geq1$ is a bounded domain with Lipschitz boundary and $\fe \in \bLp$ for some $p \in (1,\infty)$. The main originality and source of difficulty here is that we only assume that the matrix $A:\Omega \rightarrow \polM^d$ is symmetric, positive definite --- \ie that there is a constant $\alpha>0$ such that, for almost every $x \in \Omega$,
\begin{equation}
\label{eq:Apositive}
  \scl A(x) \xi, \xi \scr \geq \alpha |\xi|^2 \quad \forall \xi \in \Real^d,
\end{equation}
and that its entries are in $\BMO$ (see \S\ref{sec:notation} for notation). Notice that with this very restricted regularity on $A$, having $\GRAD w \in \Ldeuxd$ is not enough to even guarantee that the energy functional
\[
  w \mapsto \int_\Omega \scl A(x) \GRAD w(x), \GRAD w(x) \scr \diff x
\]
is bounded, see \S\ref{sub:bmo} for details.

The existence and uniqueness of solutions to \eqref{eq:bvp} for the case of bounded coefficients --- $A_{i,j} \in L^\infty(\Omega)$, for $i,j = \overline{1,d}$ --- and datum $\fe \in \Ldeuxd$ is standard in the literature. Notice however, that very little has been said concerning the approximation of solutions without further assumptions on the coefficients. In fact, as recent work shows \cite{MR3129757}, even a bounded but discontinuous coefficient can be the source of difficulties and a nonstandard treatment might be necessary to assert convergence. Assuming bounded coefficients, the classical work of N.G.~Meyers  \cite{MR0159110} shows that there is $\vare>0$ such that if $p\in (2-\vare, 2+\vare)$ and $\fe \in \bLp$, then $\GRAD \ue \in \bLp$. In this setting, the convergence of finite element schemes in $W^1(\Lp)$ is a well studied topic in the literature \cite[\S8.6]{MR2373954}. It is important to remark that the parameter $\vare$ for which these estimates are valid strongly depends on the ellipticity constants of the coefficient, and that this can be made as small as desired by increasing the value of $M$ on the upper bound
\[
  \scl A(x) \xi, \xi \scr \leq M |\xi|^2 \quad \forall \xi \in \Real^d,
\]
which we do not assume (in our case $M=\infty$). The reader is referred to \cite[\S5]{MR0159110} for an example.

On the other hand, removing the assumption of boundedness on the coefficients $A$ seems to be rather new. In particular, understanding what are the minimal requirements for \eqref{eq:bvp} to be well posed and a Meyers-type $\bLp$ estimate have only been recently studied by B.~Stroffolini in \cite{MR1837269} and S.S.~Byun in \cite{MR2110431}. To set our work in context, let us briefly describe these results. Given $\fe \in \bLp$ a variational solution of \eqref{eq:bvp} is a function $\ue \in W^1_0(\Lp)$ such that
\[
  \int_\Omega \scl A(x) \GRAD \ue , \GRAD \zeta \scr \diff x = \int_\Omega \scl \fe, \GRAD \zeta \scr \quad \forall \zeta \in C^\infty_0(\Omega).
\]
Then \cite[Theorem~1.1]{MR1837269} and \cite[Theorem~1.5]{MR2110431} state that there exists $\vare>0$ that depends on the $\BMO$ norm of the coefficient $A$, such that if $|p-2|<\vare$ then problem \eqref{eq:bvp} is well posed. To the best of our knowledge, no analysis of the approximation properties of finite elements in this setting has been made, and it is the purpose of this work to fill this gap. We will show that for a similar range of $p$, \ie that depends on the oscillation of $A$, a standard finite element scheme converges to the solution of \eqref{eq:bvp}.

To obtain convergence of our finite element scheme we will approximate the coefficient $A$ with piecewise constants, while the solution will be approximated with piecewise linears. In this setting the flux $A_h \GRAD u_h$ is piecewise constant and so we present a discrete Hodge decomposition of piecewise constant vector fields. The importance of this decomposition is that it allows us to obtain the key a priori estimate of Theorem~\ref{thm:apriori}, which is then used to pass to the limit and show that the weak limit solves \eqref{eq:bvp}. Strong convergence is then established with the help of inf-sup theory.

Our presentation is organized as follows: In section~\ref{sec:notation} we introduce the notation and main assumptions we will operate with. In particular, in \S\ref{sub:bmo} we review the most important properties of functions in the John Nirenberg space $\BMO$. The discrete Hodge decomposition of piecewise constant vector fields and some of its consequences are discussed in section~\ref{sec:Hodge}. The core of our work is section~\ref{sec:MKE} where we introduce our finite element scheme and show that it converges to the solution of \eqref{eq:bvp}. Finally, the possibility of establishing rates of convergence is discussed in section~\ref{sec:convrates}.

\section{Notation and preliminaries}
\label{sec:notation}
In this work $\Omega \subset \Real^d$, $d\geq1$, is a convex domain with polyhedral boundary. We denote by $\scl\cdot,\cdot\scr$ the Euclidean inner product in $\Real^d$ and, for $x \in \Real^d$, $|x|^2 = \scl x, x \scr$. For $B \subset \Real^d$ we denote by $|B|$ its Lebesgue measure. If $X(\Omega)$ is a Banach function space over $\Omega$, we denote by $\|\cdot\|_X$ its norm. Spaces of vector valued functions will be denoted by boldface characters. By $\Lp$ with $p \in [1,\infty]$ we denote the space of functions that are Lebesgue integrable with exponent $p$. By $W^k(\Lp)$ we denote the classical Sobolev space of functions whose distributional derivatives of up to $k$-th order are in $\Lp$. The closure of $C^\infty_0(\Omega)$ in $W^k(\Lp)$ is denoted by $W^k_0(\Lp)$. The conjugate exponent to $p$ will be denoted by $q$, \ie $\tfrac1p+\tfrac1q = 1$.

In what follows we will denote nonessential constants by $C$.

\subsection{Functions of bounded mean oscillation and their properties}
\label{sub:bmo}

In 1961 F.~John and L.~Nirenberg \cite{MR0131498} introduced the space of functions of bounded mean oscillation and proved many of its fundamental properties. For completeness we recall some basic definitions and properties, we refer to \cite{MR0131498,MR1658640,MR621018} and \cite[Chapter 6]{MR1800316} for details. For a function $w \in L^1(\Omega)$ and a cube $Q \subset \Omega$ we denote by $w_Q$ the average of $w$ on $Q$:
\[
  w_Q = \fint_Q w(x) \diff x = \frac1{|Q|} \int_Q w(x) \diff x.
\]
Define the sharp maximal function by
\begin{equation}
\label{eq:sharpmax}
  \calM^\sharp w(x) = \sup_{Q \ni x} \fint_Q |w(z) - w_Q | \diff z.
\end{equation}
We say that a function $w$ has bounded mean oscillation if $\calM^\sharp w$ is bounded, \ie
\[
  \BMO = \left\{ w \in L^1(\Omega): \calM^\sharp w \in L^\infty(\Omega) \right\}.
\]
We define a seminorm on $\BMO$ by
\[
  | w |_{\bmo} = \| \calM^\sharp w \|_{L^\infty}.
\]
Notice that this is not a norm since a function that is almost everywhere constant has zero oscillation. It can be shown, however, that these are the only functions with this property. Clearly $L^\infty(\Omega)\subset \BMO$, but there are also unbounded $\bmo$ functions. A classical example is
\begin{equation}
  w(x)=\begin{dcases}
         \log\left( \frac1{|x|} \right) & |x|<1, \\ 0 & |x|\geq 1.
       \end{dcases}
\label{eq:bmonotlinf}
\end{equation}

It is easy to see, in addition, that the Hardy Littlewood maximal function
\[
  \calM w(x) = \sup_{Q \ni x} \fint_Q |w(z)| \diff z
\]
and the sharp maximal function  are related by the point wise inequality
\[
  \calM^\sharp w(x) \leq C \calM w(x).
\]
While the reverse inequality is not true, the celebrated Fefferman-Stein inequality states that if $w \in \Lp$ with $p \in (1,\infty)$, then
\begin{equation}
  \| \calM w \|_{L^p} \leq C \| \calM^\sharp w \|_{L^p}.
\label{eq:FefStein}
\end{equation}
A fundamental property of $\BMO$ functions is the John-Nirenberg inequality which, simply put, gives an exponential decay to the distribution of a $\BMO$ function: If $w \in \BMO$, then there are constants $C_1,C_2>0$ such that for any $\lambda > 0$ and $Q \subset \Omega$
\[
  \left| \left\{ x \in Q: | w(x) - w_Q | > \lambda \right\} \right| \leq C_1 |Q| \exp\left( - \frac{C_2 \lambda}{|w|_{\bmo}} \right).
\]
This inequality has many useful consequences. In particular, if $w \in \BMO$ then there is $\lambda > 0$ such that for any cube $Q$
\[
  \fint_Q e^{\lambda | w(x) - w_Q|} \diff x < \infty.
\]
In addition, we obtain that $\BMO \subset L^r(\Omega)$ for all $r<\infty$.

Notice that, although the John-Nirenberg inequality asserts that $\BMO \subset \cap_{r\geq1} L^r(\Omega)$, example \eqref{eq:bmonotlinf} shows that $\BMO \not \subset L^\infty(\Omega)$. This is one of the many reasons why this space has become so important in harmonic analysis. It is a space that sits between $L^\infty(\Omega)$ and $\cap_{r\geq1} L^r(\Omega)$ and that in many aspects, like operator interpolation and duality, can be used to replace $L^\infty(\Omega)$.

Another implication of the inclusions $L^\infty(\Omega) \subsetneq \BMO \subsetneq \cap_{r\geq1} L^r(\Omega)$ and the main source of difficulty and originality in this work is that given $a \in \BMO$ and $w \in \Ldeux$, the integral
\[
  \int_\Omega a(x) |w(x)|^r \diff x
\]
is a priori bounded only if $r<2$. This bears an important consequence with respect to the analysis and approximation of solutions to \eqref{eq:bvp}: Standard techniques do not work! Even if $\fe \in \Ldeuxd$, where for a bounded coefficient well-known energy arguments would provide for a satisfactory analysis of \eqref{eq:bvp} and its approximations, novel techniques are necessary to assert that \eqref{eq:bvp} is well posed (\cite{MR1837269,MR2110431}), let alone to provide a convergent numerical scheme.

\subsection{Finite elements}
\label{sub:fem}

For $h>0$ we introduce a triangulation $\calT_h = \{ K \}$ of $\Omega$ consisting of simplices, which we assume is conforming and shape regular \cite{MR0520174,MR2050138}. Over each triangulation $\calT_h$ we define the finite element space
\[
  \polV_h = \left\{ w_h \in C(\bar\Omega): w_{h|K} \in \polP_1 \right\},
\]
of continuous piecewise linear functions. To account for boundary conditions we define
\[
  \polX_h = \left\{ w_h \in \polV_h: w_{h|\partial\Omega} = 0 \right\}.
\]
We will also make use of a space of vector valued piecewise constant functions
\[
  \polQ_h = \left\{ \bq_h \in \bL^\infty( \Omega ) : \bq_{h|K} \in \Real^d \right\}.
\]

We do not assume that the triangulations $\calT_h$ are quasi uniform, instead we will assume that they are such that allow a best approximation property in $W^1(L^\infty(\Omega))$. In other words, if $w \in W^1_0(L^\infty(\Omega))$ and $w_h \in \polX_h$ solves
\[
  \int_\Omega \scl \GRAD w_h, \GRAD \zeta_h \scr \diff x = \int_\Omega \scl \GRAD w, \GRAD \zeta_h \scr \diff x \quad \forall \zeta_h \in \polX_h,
\]
then we have
\[
  \| \GRAD (w - w_h ) \|_{\bL^\infty} \leq C \min_{\zeta_h \in \polX_h} \| \GRAD( w - \zeta_h ) \|_{\bL^\infty}.
\]
More importantly, this implies
\begin{equation}
  \| \GRAD w_h \|_{\bL^\infty} \leq C \| \GRAD w \|_{\bL^\infty}.
\label{eq:W1inftystab}
\end{equation}
Notice also that interpolation between a trivial energy estimate and \eqref{eq:W1inftystab} yields
\begin{equation}
  \| \GRAD w_h \|_{\bL^p} \leq C \| \GRAD w \|_{\bL^p}
\label{eq:W1pstab}
\end{equation}
for every $p \in [2,\infty]$. Duality yields the same estimate for $p\in (1,2)$. Reference \cite{MR2869035} studies conditions that are much more general than quasi uniformity but still allow for an estimate like \eqref{eq:W1inftystab}. The Appendix of \cite{MR2754849} develops and analyzes an optimal algorithm for the construction of such meshes. In 
simple terms these meshes are such that if $\frakh_{\calT_h}$ is a smooth enough function such that for every $K \in \calT_h$ we have $\frakh_{\calT_h|K} \approx |K|^{1/n}$ --- \ie the local mesh size --- then $\|\GRAD \frakh_{\calT_h}\|_{\bL^\infty}$ is small enough.

As our meshes are not assumed quasi uniform there is no attached mesh size to them and, thus, the discretization parameter $h>0$ does not have any real meaning. Nevertheless, we will stick to standard notation and denote by $h \to 0$ the result of
\[
  \bigcup_{h>0} \polV_h, \qquad \bigcup_{h>0} \polX_h, \qquad \bigcup_{h>0} \polQ_h,
\]
which we assume dense in $W^1(\Lp)$, $W^1_0(\Lp)$ and $\bL^r(\Omega)$, $r \in [1,\infty)$, respectively. Moreover, we assume that the family of triangulations $\{\calT_h\}_{h>0}$ is constructed in such a way that for every $x \in \Omega$ there is a family of simplices that shrinks nicely \cite[\S 3.4]{MR1681462} to $x$. This process can be rigorously described using nets, but we shall not dwell on these technicalities.

When treating the discretization of \eqref{eq:bvp} it will become necessary to compute a variant of the Hardy Littlewood maximal function over $\calT_h$. Namely, if $\calT_h = \{K\}$, then
\[
  \calM_{\calT_h} w (x) = \sup_{K \ni x} \fint_K |w(z)| \diff z.
\]
Notice that for almost every $x\in \Omega$, there is a unique $K \in \calT_h$ such that $x \in K$. The remaining points are those that lie on faces or edges of the mesh and for these there is only a finite number of cells that contain it. Thus, $\calM_{\calT_h} w$ is everywhere defined for any $w \in L^1(\Omega)$.
The following result states that $\calM_{\calT_h}$ is point wise controlled by $\calM$.

\begin{lemma}[relation between maximal functions]
\label{lem:maximals}
If $\calT_h$ is shape regular, then there is a constant $C$ independent of $h$ such that
\[
  \calM_{\calT_h} w (x) \leq C \calM w(x)
\]
for every $w \in L^1(\Omega)$.
\end{lemma}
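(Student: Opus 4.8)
The plan is to reduce the statement to one elementary geometric fact: a shape‑regular simplex is always contained in an axis–parallel cube of comparable Lebesgue measure, with the comparison constant depending only on $d$ and on the shape‑regularity constant $\sigma$ of the family $\{\calT_h\}_{h>0}$. Once this is in hand, the bound follows by enlarging the domain of averaging from $K$ to such a cube and paying only a bounded factor.

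First I would fix $x\in\Omega$ and a simplex $K\in\calT_h$ with $x\in K$. Write $h_K=\operatorname{diam}K$ and let $\rho_K$ be the inradius of $K$; shape regularity gives $h_K\le\sigma\rho_K$, and since $K$ contains the inscribed ball of radius $\rho_K$,
\[
  |K|\ \ge\ \omega_d\,\rho_K^{\,d}\ \ge\ \omega_d\,\sigma^{-d}\,h_K^{\,d},
\]
where $\omega_d$ is the volume of the unit ball in $\Real^d$. On the other hand, a set of diameter $h_K$ has all coordinate projections of length at most $h_K$, hence $K$ is contained in an axis–parallel cube $Q$ of side $h_K$; then $|Q|=h_K^{\,d}\le\omega_d^{-1}\sigma^{d}|K|=:C|K|$, and $x\in K\subset Q$ forces $x\in Q$.

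Next, using $K\subset Q$ and this volume comparison,
\[
  \fint_K|w(z)|\diff z
    \ =\ \frac{1}{|K|}\int_K|w(z)|\diff z
    \ \le\ \frac{1}{|K|}\int_Q|w(z)|\diff z
    \ =\ \frac{|Q|}{|K|}\fint_Q|w(z)|\diff z
    \ \le\ C\,\fint_Q|w(z)|\diff z
    \ \le\ C\,\calM w(x).
\]
Since for every $x\in\Omega$ only finitely many $K\in\calT_h$ contain $x$, taking the supremum over all such $K$ gives $\calM_{\calT_h}w(x)\le C\,\calM w(x)$ with $C=C(d,\sigma)$ independent of $h$, as claimed. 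The only step requiring a word of care is that the cube $Q$ may protrude from $\Omega$ when $K$ abuts $\partial\Omega$; this is harmless, with $w$ understood to vanish outside $\Omega$ in the definition of $\calM$ (or, equivalently for the convex — hence Lipschitz — domain $\Omega$, by working with cubes $Q$ satisfying $|Q\cap\Omega|\gtrsim|Q|$), and the displayed chain is unaffected. I do not expect a genuine obstacle: apart from this minor boundary technicality, the entire content of the lemma is the volume comparison $|Q|\le C|K|$, which is precisely what shape regularity provides, uniformly in $h$.
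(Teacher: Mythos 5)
Your proof is correct and follows essentially the same route as the paper: enclose the shape-regular simplex $K$ in a cube $Q$ with $|Q|\leq C|K|$ and compare the averages, then take suprema. Your version merely makes explicit the inradius/diameter computation and the harmless boundary issue that the paper leaves implicit.
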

\begin{proof}
This is an easy consequence of shape regularity. If $x \in K$, there is a cube $Q$ such that $x \in K \subset Q$. Moreover, by shape regularity, the ratio $|Q|/|K|$ is uniformly bounded. This implies
\[
  \fint_K |w(z)| \diff z = \frac{|Q|}{|Q|} \frac1{|K|} \int_K |w(z)| \diff z \leq C \fint_Q |w(z)|\diff z,
\]
where the constant in the last inequality is independent of $K$. Taking suprema yields the result.
\end{proof}

We discretize the coefficient $A$ by a piecewise constant approximation $A_h$, such that every column $A_{h,i} \in \polQ_h$, $i=\overline{1,d}$, as follows
\[
  A_{h|K} = \fint_K A(z) \diff z.
\]

\begin{lemma}[approximation of $\bmo$ coefficients]
\label{lem:approxbmo}
If $A:\Omega \rightarrow \polM^d$ is symmetric, positive definite in the sense of \eqref{eq:Apositive} and its entries are in $\BMO$, then $A_{h,ij} \in L^\infty(\Omega)$ for $i,j = \overline{1,d}$. Moreover, the coercivity constant of $A$ is that of $A_h$. Finally, as $h\to 0$, we have that $\| A - A_h\|_{L^r} \to 0$ for $r \in [1,\infty)$.
\end{lemma}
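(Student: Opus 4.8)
The plan is to establish the three assertions in order, exploiting the fact that each entry of $A_{h|K}$ is a cube average — more precisely a simplex average — of a $\BMO$ function, and then piecing together the local estimates.

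\textbf{Step 1: $A_{h,ij} \in L^\infty(\Omega)$.} Fix $i,j$ and set $a = A_{ij} \in \BMO$. For $x \in K$ we have $|A_{h,ij}(x)| = |a_K| \le |a_K - a_{Q}| + |a_Q|$ for a cube $Q \supset K$ furnished by shape regularity, as in Lemma~\ref{lem:maximals}. Actually the cleanest route is to observe directly that
\[
  |A_{h,ij}(x)| = \left| \fint_K a(z)\diff z \right| \le \fint_K |a(z)| \diff z \le \calM_{\calT_h} a(x) \le C \calM a(x),
\]
using Lemma~\ref{lem:maximals}. Since $a \in \BMO \subset L^r(\Omega)$ for every $r<\infty$, this alone only gives $A_{h,ij} \in L^r$. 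To get $L^\infty$ one instead argues: by shape regularity each $K$ sits in a cube $Q_K$ with $|Q_K|/|K| \le c$, so $\fint_K|a - a_{Q_K}| \le c\,\fint_{Q_K}|a - a_{Q_K}| \le c\,|a|_{\bmo}$; hence $|a_K| \le c\,|a|_{\bmo} + |a_{Q_K}|$. This is not yet uniform because $|a_{Q_K}|$ can depend on $K$. The honest statement is simply that for \emph{fixed} $h$, $A_h$ is a piecewise constant function taking finitely many values (one per simplex in the finite triangulation $\calT_h$), each of which is a finite real number because $a \in L^1(\Omega)$; therefore $A_{h,ij} \in L^\infty(\Omega)$ trivially, with a bound that may depend on $h$. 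I would phrase the lemma accordingly, emphasizing that the $L^\infty$ membership is qualitative and per-$h$.

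\textbf{Step 2: coercivity is preserved.} Fix $\xi \in \Real^d$ and $x \in K$. Then
\[
  \scl A_{h}(x)\xi,\xi\scr = \left\langle \Big(\fint_K A(z)\diff z\Big)\xi,\xi\right\rangle = \fint_K \scl A(z)\xi,\xi\scr \diff z \ge \fint_K \alpha|\xi|^2 \diff z = \alpha|\xi|^2,
\]
using linearity of the average and the pointwise bound \eqref{eq:Apositive} valid for a.e.\ $z$. Symmetry of $A_h$ is immediate since averaging preserves symmetry of the matrix entries. Thus $A_h$ inherits the ellipticity constant $\alpha$.

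\textbf{Step 3: $\|A - A_h\|_{L^r} \to 0$ for $r \in [1,\infty)$.} It suffices to treat each entry $a = A_{ij}$. The operator $\Pi_h a := a_K$ on $K$ is the $L^2$-orthogonal projection onto piecewise constants subordinate to $\calT_h$; by the density assumption on $\bigcup_h \polQ_h$ in $\bL^r(\Omega)$ (hence on the scalar piecewise constants in $L^r(\Omega)$) together with the standard Lebesgue-point/shrinking-cubes argument, $\Pi_h a \to a$ a.e.\ as $h \to 0$ — this is where the hypothesis that the simplices containing a given point \emph{shrink nicely} to it enters, via the Lebesgue differentiation theorem \cite[\S3.4]{MR1681462}. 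To upgrade a.e.\ convergence to $L^r$ convergence I would invoke a dominated-convergence / uniform-integrability argument: $|\Pi_h a| \le \calM_{\calT_h} a \le C\calM a$ by Lemma~\ref{lem:maximals}, and $\calM a \in L^s(\Omega)$ for every $s \in (1,\infty)$ by the Fefferman–Stein inequality \eqref{eq:FefStein} applied to $a \in \BMO$ (so $\calM^\sharp a \in L^\infty \subset L^s$ on the bounded domain $\Omega$), whence $\calM a \in L^s$. Choosing $s > r$ gives an $L^s$-bounded, hence uniformly $r$-integrable, dominating sequence, and Vitali's theorem yields $\|a - \Pi_h a\|_{L^r} \to 0$.

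\textbf{Main obstacle.} The one genuinely delicate point is Step~3: ordinary piecewise-constant approximation theory uses a Poincaré inequality on each cell, $\fint_K |a - a_K| \lesssim h_K \|\GRAD a\|_{L^\infty(K)}$, which is unavailable here since $a$ need not even be in $L^\infty$ and the meshes are not quasi-uniform. The fix is to abandon quantitative rates and rely on (i) the Lebesgue differentiation theorem along the nicely shrinking simplices for a.e.\ convergence and (ii) the $\BMO \Rightarrow \calM a \in L^s$ gain from Fefferman–Stein to produce a dominating function, then close with Vitali. Everything else is routine.
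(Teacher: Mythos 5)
Your proof is correct and follows essentially the same route as the paper: coercivity by pulling the quadratic form through the cell average, $L^\infty$ membership per fixed $h$ as a trivial consequence of finitely many finite averages, a.e.\ convergence from the nicely shrinking simplices via Lebesgue differentiation, and domination by $C\calM A$ through Lemma~\ref{lem:maximals}. The only cosmetic differences are in how the dominating function is integrated: the paper gets $\calM A \in L^r(\Omega)$ from $\BMO \subset L^r(\Omega)$ (John--Nirenberg) plus $L^r$-boundedness of the maximal operator and closes with dominated convergence, handling $r=1$ via the embedding $L^2(\Omega)\subset L^1(\Omega)$, whereas you go through Fefferman--Stein and Vitali --- both are fine.
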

\begin{proof}
The fact that the entries of $A_h$ are in $L^\infty(\Omega)$ is evident.

To show that the coercivity constants coincide consider $\xi \in \Real^d$ and $K \in \calT_h$
\[
  \scl A_{h|K} \xi, \xi \scr = \scl \fint_K A(z) \diff z\ \xi, \xi \scr = \fint_K \scl A(z) \xi, \xi \scr \diff z \geq \alpha \fint_K |\xi|^2 \diff z 
  = \alpha |\xi|^2 .
\]

The convergence in $L^r(\Omega)$ for any $r \in [1,\infty)$ is obtained as a consequence of the fact that the simplices shrink nicely to points, for this implies that $A_h(x) \to A(x)$ for a.e.~$x \in \Omega$. Using Lemma~\ref{lem:maximals}
\[
  |A_h| = \left| \fint_K |A(z)| \diff z \right| \leq \calM_{\calT_h} A \leq C \calM A.
\]
When $r>1$, $A \in L^r(\Omega)$ together with the fact that the Hardy Littlewood maximal function is bounded in $L^r(\Omega)$ yield $\calM A \in L^r(\Omega)$. Dominated convergence implies that $A_h \to A$ in $L^r(\Omega)$. Convergence in $L^1(\Omega)$ follows from the convergence in, say, $\Ldeux$ and the fact that $\Omega$ is bounded.
\end{proof}

\section{A discrete Hodge decomposition}
\label{sec:Hodge}

The classical Hodge decomposition states that if $\bw \in \bL^r(\Omega)$ with $r \in (1,\infty)$, then it can be uniquely decomposed as $\bw = \GRAD \phi + \bh$, where $\phi \in W^1_0(L^r(\Omega))$ and $\bh \in \bL^r(\Omega)$ is weakly divergence free. Here we present a discrete variant of this result and some applications.

\begin{theorem}[discrete Hodge decomposition]
\label{thm:discHodge}
Let $r \in (1,\infty)$. If $\bs_h \in \polQ_h$, then there are unique $\varphi_h \in \polX_h$ and $\bg_h \in \polQ_h$ such that
\[
  \bs_h = \GRAD \varphi_h + \bg_h, \qquad \int_\Omega \scl \bg_h, \GRAD \zeta_h \scr \diff x = 0 \quad \forall \zeta_h \in \polX_h.
\]
Moreover,
\[
  \| \GRAD \varphi_h \|_{\bL^r} + \| \bg_h \|_{\bL^r} \leq C \| \bs_h \|_{\bL^r},
\]
for a constant that is independent of $\bs_h$ and $h$.
\end{theorem}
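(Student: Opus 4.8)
The plan is to build the decomposition by solving a discrete Dirichlet problem and then to derive the $\bL^r$ bound by comparison with the classical Hodge decomposition recalled at the beginning of this section.

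First I would define $\varphi_h \in \polX_h$ as the unique solution of
\[
  \int_\Omega \scl \GRAD \varphi_h, \GRAD \zeta_h \scr \diff x = \int_\Omega \scl \bs_h, \GRAD \zeta_h \scr \diff x \qquad \forall \zeta_h \in \polX_h;
\]
existence and uniqueness are immediate from the Lax--Milgram lemma, the Dirichlet form being coercive on $\polX_h$ by the Poincar\'e inequality and the right-hand side being a bounded functional since $\bs_h \in \polQ_h \subset \bL^2(\Omega)$. I would then set $\bg_h := \bs_h - \GRAD \varphi_h$. Since $\varphi_h$ is continuous and piecewise linear, $\GRAD \varphi_h$ is piecewise constant, so $\GRAD \varphi_h \in \polQ_h$ and therefore $\bg_h \in \polQ_h$; and the equation defining $\varphi_h$ says precisely that $\int_\Omega \scl \bg_h, \GRAD \zeta_h \scr \diff x = 0$ for all $\zeta_h \in \polX_h$. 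This gives existence. For uniqueness, if two pairs $(\varphi_h^i,\bg_h^i)$, $i=1,2$, realize the decomposition, then $\psi_h := \varphi_h^1 - \varphi_h^2 \in \polX_h$ satisfies $\GRAD \psi_h = \bg_h^2 - \bg_h^1$, and testing the orthogonality relations with $\zeta_h = \psi_h$ yields $\| \GRAD \psi_h \|_{\bL^2}^2 = 0$; Poincar\'e then forces $\psi_h \equiv 0$ and hence $\bg_h^1 = \bg_h^2$.

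The $\bL^r$ estimate is the crux. For $r=2$ it is the elementary energy estimate: testing with $\zeta_h = \varphi_h$ gives $\|\GRAD\varphi_h\|_{\bL^2} \le \|\bs_h\|_{\bL^2}$ and then $\|\bg_h\|_{\bL^2} \le 2\|\bs_h\|_{\bL^2}$. For general $r$ I would invoke the classical Hodge decomposition of $\bs_h$, writing $\bs_h = \GRAD \phi + \bh$ with $\phi \in W^1_0(L^r(\Omega))$, $\bh \in \bL^r(\Omega)$ weakly divergence free, and $\| \GRAD \phi \|_{\bL^r} + \| \bh \|_{\bL^r} \le C \| \bs_h \|_{\bL^r}$. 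Since $\polX_h \subset W^1_0(L^{r'}(\Omega))$, we have $\int_\Omega \scl \bh, \GRAD \zeta_h \scr \diff x = 0$ for every $\zeta_h \in \polX_h$, so the defining equation for $\varphi_h$ becomes
\[
  \int_\Omega \scl \GRAD \varphi_h, \GRAD \zeta_h \scr \diff x = \int_\Omega \scl \GRAD \phi, \GRAD \zeta_h \scr \diff x \qquad \forall \zeta_h \in \polX_h,
\]
that is, $\varphi_h$ is the Ritz projection of $\phi \in W^1_0(L^r(\Omega))$ onto $\polX_h$. The $W^1(\bL^r)$ stability of this projection is exactly assumption \eqref{eq:W1pstab}, so $\| \GRAD \varphi_h \|_{\bL^r} \le C \| \GRAD \phi \|_{\bL^r} \le C \| \bs_h \|_{\bL^r}$, and then $\| \bg_h \|_{\bL^r} \le \| \bs_h \|_{\bL^r} + \| \GRAD \varphi_h \|_{\bL^r} \le C \| \bs_h \|_{\bL^r}$, with $C$ independent of $\bs_h$ and $h$ since the constant in \eqref{eq:W1pstab} is.

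The one genuinely nontrivial point --- the main obstacle --- is precisely this $\bL^r$ bound for $r \ne 2$. Its resolution rests on two observations: that the weakly divergence-free part $\bh$ of the continuous Hodge decomposition is invisible to the discrete problem, so that $\varphi_h$ coincides with the Ritz projection of the continuous potential $\phi$; and that the Ritz projection is $W^1(\bL^r)$-stable uniformly in $h$, which is exactly where the non-quasi-uniform mesh hypothesis behind \eqref{eq:W1inftystab}--\eqref{eq:W1pstab} is used.
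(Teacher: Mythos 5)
Your proof is correct and follows essentially the same construction as the paper's: define $\varphi_h$ by the discrete Dirichlet problem with datum $\bs_h$, set $\bg_h = \bs_h - \GRAD \varphi_h$, and obtain uniqueness by testing the difference of two decompositions with the gradient of the difference of the potentials. The only difference is that you justify the uniform $\bL^r$ bound by first invoking the continuous Hodge decomposition so that \eqref{eq:W1pstab} is applied to a genuine Ritz projection of the continuous potential, whereas the paper cites \eqref{eq:W1pstab} directly for the non-gradient datum $\bs_h$; this is a more careful justification of the same step, not a different argument.
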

\begin{proof}
We split the proof in two steps:

\begin{enumerate}[$\bullet$]
  \item \emph{Existence:} We define $\varphi_h \in \polX_h$ as the solution of
  \[
    \int_\Omega \scl \GRAD \varphi_h, \GRAD \zeta_h \scr \diff x = \int_\Omega \scl \bs_h ,\GRAD \zeta_h \scr \diff x \quad \forall \zeta_h \in \polX_h.
  \]
  By \eqref{eq:W1pstab} we have
  \[
    \| \GRAD \varphi_h \|_{\bL^r} \leq C \| \bs_h \|_{\bL^r}.
  \]
  Define $\bg_h = \bs_h - \GRAD \varphi_h$. By construction
  \[
    \int_\Omega \scl \bg_h , \GRAD \zeta_h \scr \diff x = 0 \quad \forall \zeta_h \in \polX_h, \qquad \| \bg_h \|_{\bL^r} \leq C \| \bs_h \|_{\bL^r}.
  \]
  
  \item \emph{Uniqueness:} Assume that, in addition, $\bs_h = \GRAD \tilde\varphi_h + \tilde\bg_h$, \ie $\GRAD(\varphi_h - \tilde\varphi_h ) + (\bg_h - \tilde\bg_h) = 0$. Multiply this last identity by $\GRAD(\varphi_h - \tilde\varphi_h )$ and integrate to conclude $\varphi_h = \tilde\varphi_h$. This in turn implies that $\bg_h = \tilde \bg_h$.
\end{enumerate}

Notice that the mesh restriction and, more importantly, its corollary \eqref{eq:W1pstab} is fundamental to obtain this result.
\end{proof}

The main application of this discrete Hodge decomposition is in obtaining a priori bounds for a finite element approximation of \eqref{eq:bvp}. The first corollary requires the following nonlinear interpolation result \cite[Proposition 1]{MR1288682}.

\begin{theorem}[nonlinear interpolation \cite{MR1288682}]
\label{thm:Iwaniec}
Let $(X,\mu)$ be a measure space and $E$ a Hilbert space with norm $|\cdot|$. Denote by $L^r(X,E)$ the space of $E$-valued Lebesgue integrable functions to power $r$. Let $1\leq r_1 < r < r_2$ and suppose that $T:L^r(X,E) \rightarrow L^r(X,E)$ is a bounded linear operator for all $r \in [r_1,r_2]$. If $Tw=0$ and
\[
  \frac{r}{r_2} -1 \leq \vare \leq \frac{r}{r_1}-1,
\]
then
\[
  \left\| T(|w|^\vare w) \right\|_{L^{\frac{r}{1+\vare}}(X,E)} \leq C(r;r_1,r_2) |\vare| \| w \|_{L^r(X,E)}^{1+\vare},
\]
for a constant $C(r;r_1,r_2) = \tfrac{2r(r_2-r_1)}{(r-r_1)(r_2-r)}\sup_{s \in [r_1,r_2]} \| T \|_{L^s(X,E)\to L^s(X,E)}$.
\end{theorem}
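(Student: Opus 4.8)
\medskip

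The plan is to reduce the statement to the classical Riesz--Thorin/Marcinkiewicz-style interpolation machinery by exploiting the special structure of the nonlinear map $w \mapsto T(|w|^\vare w)$, which is \emph{homogeneous of degree} $1+\vare$ but not additive. The key observation, due to Iwaniec, is that although $T$ is linear, the substitution $w \mapsto |w|^\vare w$ is nonlinear, so one cannot apply interpolation directly; instead one writes the quantity to be estimated as a suitable average of linear estimates and uses the hypothesis $Tw = 0$ crucially to kill the leading-order term.

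\medskip

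First I would fix $w \in L^r(X,E)$ with $\|w\|_{L^r(X,E)} = 1$ (by homogeneity this is no loss) and set $v = |w|^\vare w$, noting $|v| = |w|^{1+\vare}$ so that $v \in L^{r/(1+\vare)}(X,E)$ with the expected norm. The exponent condition $\frac{r}{r_2}-1 \le \vare \le \frac{r}{r_1}-1$ is exactly what is needed for the target exponent $s := r/(1+\vare)$ to satisfy $r_1 \le s \le r_2$, so $T$ is bounded on $L^s(X,E)$ and $\|Tv\|_{L^s} \le \|T\|_{L^s \to L^s}\,\|v\|_{L^s} = \|T\|_{L^s\to L^s}$. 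This already gives a bound, but \emph{without} the factor $|\vare|$; the whole point is to extract that vanishing factor as $\vare \to 0$. Here is where $Tw = 0$ enters: write $|w|^\vare w = w + (|w|^\vare - 1) w$, so $T(|w|^\vare w) = T\big((|w|^\vare-1)w\big)$, and now the perturbation $(|w|^\vare - 1)w$ is small pointwise when $\vare$ is small since $|w|^\vare - 1 = \vare \log|w| + O(\vare^2)$ on the set where $|w|$ is comparable to $1$.

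\medskip

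The main technical step—and the one I expect to be the real obstacle—is to make this ``$O(\vare)$'' rigorous uniformly over all of $X$ despite the fact that $\log|w|$ is unbounded (both $|w|$ large and $|w|$ small are problematic). The standard device is a dyadic decomposition of $X$ according to the size of $|w|$: split $X = \bigcup_{k \in \polZ} X_k$ with $X_k = \{\,2^k \le |w| < 2^{k+1}\,\}$, decompose $v = \sum_k v \chi_{X_k}$, estimate $T(v\chi_{X_k})$ in the $L^{r_1}$ or $L^{r_2}$ norm (choosing the better endpoint depending on the sign of $k$, i.e.\ on whether $|w| \ge 1$ or $< 1$), and then sum a geometric-type series in $k$ whose ratio is controlled by a power of $2^{\vare}$. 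Concretely, on $X_k$ one has $|v| = |w|^{1+\vare} \approx 2^{k(1+\vare)}$, and $\|v\chi_{X_k}\|_{L^{r_j}}^{r_j} \approx 2^{k(1+\vare)r_j} |X_k|$; interpolating the two endpoint bounds for $T(v\chi_{X_k})$ and re-summing over $k$, the series converges precisely because $r_1 < r < r_2$ strictly, and a careful accounting of the geometric ratios produces the factor $\tfrac{2r(r_2-r_1)}{(r-r_1)(r_2-r)}$ together with the decisive factor $|\vare|$ coming from $\sum_k |X_k| \, 2^{kr}\big(\text{weights that telescope to give }|2^{\vare r_j \cdot(\cdots)}-1| \lesssim |\vare|\big)$ against the normalization $\|w\|_{L^r}^r = 1$. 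The bookkeeping is delicate: one must be careful that the constant does not blow up as $\vare \to (r/r_1 - 1)$ or $\vare \to (r/r_2 - 1)$, which is why the hypothesis is a closed interval and why $T$ must be bounded on the \emph{closed} range $[r_1, r_2]$. Since this is quoted verbatim from \cite[Proposition~1]{MR1288682}, I would in practice simply cite it; but the sketch above is how I would reconstruct the argument if pressed.
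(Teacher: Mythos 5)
The paper itself does not prove this theorem: it is quoted verbatim from Iwaniec and Sbordone \cite[Proposition~1]{MR1288682}, so your stated intention to ``simply cite it'' is exactly what the paper does. The issue is with the reconstruction you sketch, which I do not believe can be completed as described; the gap sits precisely at the point you yourself flag as ``the real obstacle.'' Writing $T(|w|^\vare w)=T\bigl((|w|^\vare-1)w\bigr)$ and treating $(|w|^\vare-1)w$ as an $O(\vare)$ perturbation fails on the tails of $w$: take $|w|=\lambda$ on a set of measure $\lambda^{-r}$ (so $\|w\|_{L^r}=1$) and $s=r/(1+\vare)$; then $\|(|w|^\vare-1)w\|_{L^s}\to 1$ as $\lambda\to\infty$, not $O(|\vare|)$. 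Hence boundedness of $T$ at the single exponent $s$ combined with a pointwise multiplier estimate can never produce the factor $|\vare|$ --- indeed, any proof must genuinely use boundedness at a range of exponents, since the conclusion degenerates to $T(|w|^\vare w)$ being small for small $\vare$, which is false for a general bounded operator on a single $L^s$.

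Your proposed repair --- dyadic level sets $X_k$ of $|w|$, endpoint bounds at $r_1$ or $r_2$ on each piece, and a geometric resummation --- does not reassemble into the claimed estimate. An $L^{s}$ bound on $\sum_k T(v\chi_{X_k})$ cannot be obtained by mixing $L^{r_1}$ and $L^{r_2}$ norms of the individual pieces without a further interpolation mechanism, and the step where ``weights telescope to give $|\vare|$'' is exactly the assertion that requires proof. Worse, once you localize to $X_k$ the hypothesis $Tw=0$ is no longer usable, since $T(w\chi_{X_k})\neq 0$ in general, so the only source of the $|\vare|$ gain has disappeared from the scheme. The known proof of \cite[Proposition~1]{MR1288682} is complex-analytic rather than real-variable: one considers the holomorphic family $z\mapsto T(|w|^{z}w)$ (paired against a dual function), notes that it vanishes at $z=0$ precisely because $Tw=0$, controls it for $z$ in a region where the corresponding Lebesgue exponents remain in $[r_1,r_2]$ using the uniform operator bounds, and then applies a Schwarz-lemma argument; this simultaneously yields the factor $|\vare|$ and the explicit constant $\tfrac{2r(r_2-r_1)}{(r-r_1)(r_2-r)}\sup_{s\in[r_1,r_2]}\|T\|_{L^s\to L^s}$, whose reciprocal-distance structure in $r-r_1$ and $r_2-r$ is the fingerprint of that argument, not of a dyadic summation. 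If you want to present a proof rather than a citation, that is the route to follow (or, alternatively, the commutator-of-interpolation framework of Rochberg--Weiss, which is also not elementary).
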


With this powerful result at hand we prove a bound on the discretely divergence free part of the Hodge decomposition of the $\bLp$ conjugate of the gradient, \ie $\bs_h = |\GRAD u_h |^{p-2} \GRAD u_h $.

\begin{corollary}[decomposition of the conjugate]
\label{cor:boundgh}
Let $1<p^-<p<p^+<\infty$. Given $u_h \in \polX_h$ define $\bs_h = |\GRAD u_h |^{p-2} \GRAD u_h \in \polQ_h$. If $\bg_h \in \polQ_h$ denotes the discretely divergence free component of the discrete Hodge decomposition of $\bs_h$ in $\bL^q(\Omega)$, then
\[
  \| \bg_h \|_{\bL^q} \leq C(p;p^-,p^+) |p-2| \| \GRAD u_h \|_{\bL^p}^{p/q},
\]
where the constant $C(p,p^-,p^+)$ is as in Theorem~\ref{thm:Iwaniec}.
\end{corollary}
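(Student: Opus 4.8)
The plan is to apply Theorem~\ref{thm:Iwaniec} with the right choice of operator, exponents, and the parameter $\vare$, and then track how the discrete Hodge decomposition interacts with it. First I would set $w = \GRAD u_h \in \bL^p(\Omega)$, viewed as an $E$-valued function with $E = \Real^d$ a Hilbert space, and choose $\vare = p - 2$, so that $|w|^\vare w = |\GRAD u_h|^{p-2}\GRAD u_h = \bs_h$ and the target exponent $\tfrac{r}{1+\vare}$ matches up: indeed with $r = p$ we get $\tfrac{p}{1+(p-2)} = \tfrac{p}{p-1} = q$, which is exactly the space in which we are decomposing $\bs_h$. So the Iwaniec interpolation machinery is perfectly tuned to produce a bound of the form $\| T(|w|^\vare w)\|_{\bL^q} \leq C|p-2|\,\|\GRAD u_h\|_{\bL^p}^{p-1} = C|p-2|\,\|\GRAD u_h\|_{\bL^p}^{p/q}$, which is the claimed estimate.

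The operator $T$ I would take is the one that extracts the discretely divergence-free component: $T\colon \bs_h \mapsto \bg_h$, where $\bs_h = \GRAD\varphi_h + \bg_h$ is the discrete Hodge decomposition of Theorem~\ref{thm:discHodge}. Strictly, $T$ is defined on $\polQ_h$, but since $\polQ_h$ is dense in $\bL^r(\Omega)$ for every finite $r$ and the decomposition bound $\|\bg_h\|_{\bL^r} \leq C\|\bs_h\|_{\bL^r}$ is uniform, $T$ extends to a bounded linear operator on $\bL^r(X,E) = \bL^r(\Omega)$ for all $r \in (1,\infty)$, with operator norm bounded independently of $h$ (this is where the mesh restriction, through \eqref{eq:W1pstab}, is essential). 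Pick $r_1 = p^-$ and $r_2 = p^+$; then the hypothesis of Theorem~\ref{thm:Iwaniec} requires $\tfrac{p}{p^+} - 1 \leq p - 2 \leq \tfrac{p}{p^-} - 1$, which holds for $p$ in a neighbourhood of $2$ determined by $p^-, p^+$ — precisely the interval stated. The one nontrivial point to verify before invoking the theorem is the condition $Tw = 0$: here $w = \GRAD u_h$ is itself a gradient, so in its own discrete Hodge decomposition the divergence-free part is zero, i.e. $T(\GRAD u_h) = 0$. This is immediate from uniqueness in Theorem~\ref{thm:discHodge}.

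Assembling these: apply Theorem~\ref{thm:Iwaniec} with $X = \Omega$, $E = \Real^d$, the operator $T$ above, $r = p$, $r_1 = p^-$, $r_2 = p^+$, $\vare = p-2$, and $w = \GRAD u_h$. The hypothesis $Tw = 0$ holds because $\GRAD u_h$ is a discrete gradient, the admissibility range for $\vare$ holds by the assumption $p^- < p < p^+$ near $2$, and the output is exactly $\|\bg_h\|_{\bL^q} = \|T(\bs_h)\|_{\bL^q} \leq C(p;p^-,p^+)|p-2|\,\|\GRAD u_h\|_{\bL^p}^{1+\vare}$ with $1+\vare = p-1 = p/q$. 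The constant is the one from Theorem~\ref{thm:Iwaniec}, which involves $\sup_{s\in[p^-,p^+]}\|T\|_{\bL^s\to\bL^s}$; this supremum is finite and $h$-independent by the uniform bound in Theorem~\ref{thm:discHodge}, so $C(p;p^-,p^+)$ genuinely does not depend on $h$.

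The main obstacle — really the only place requiring care rather than bookkeeping — is confirming that $T$ is well-defined as a \emph{linear} operator on all of $\bL^s(\Omega)$ (not merely on $\polQ_h$) with $h$-uniform norm, since Theorem~\ref{thm:Iwaniec} is stated for operators on $L^s(X,E)$, not on the finite-dimensional subspace $\polQ_h$. One resolves this by composing with the $\bL^s$-stable projection onto $\polQ_h$ (or simply noting that for fixed $h$ one works entirely within $\polQ_h \subset \bL^s$, and the constant $C(p;p^-,p^+)$ inherits $h$-independence from the uniform constant in Theorem~\ref{thm:discHodge}); either way the argument goes through. Everything else — matching exponents, the vanishing of $Tw$, the algebra $1+\vare = p/q$ — is routine.
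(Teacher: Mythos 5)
Your argument is essentially the paper's proof: apply Theorem~\ref{thm:Iwaniec} to the operator $T$ that extracts the discretely divergence free part of the discrete Hodge decomposition, with $w=\GRAD u_h$, $r=p$, $\vare=p-2$, observing that $T(\GRAD u_h)=0$ by uniqueness in Theorem~\ref{thm:discHodge} and that $1+\vare=p/q$. The one inaccuracy is the proposed extension of $T$ ``by density'': for a \emph{fixed} $h$ the space $\polQ_h$ is finite dimensional and certainly not dense in $\bL^r(\Omega)$ (density holds only for the union over $h$), so that justification fails; your alternative fix---composing with the elementwise averaging projection onto $\polQ_h$, which is $L^s$-stable with constant $1$ uniformly in $h$ and $s$ and is the identity on $\polQ_h$, hence preserves both $T(\GRAD u_h)=0$ and $T\bs_h=\bg_h$---is valid. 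The paper sidesteps this issue more cleanly by choosing the measure space $X=\calT_h$ with $\mu(K)=|K|$, so that $L^r(X,E)$ is isometrically identified with $\polQ_h$ equipped with the $\bL^r(\Omega)$ norm and no extension of $T$ beyond $\polQ_h$ is ever needed.
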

\begin{proof}
We apply Theorem~\ref{thm:Iwaniec}. Set $X = \calT_h$ with $\mu(K)=|K|$ and $E=\Real^d$. Any function $\bq_h \in \polQ_h$ can be uniquely identified with a map $q: \calT_h \rightarrow E$ by $q(K) = \bq_{h|K}$. Moreover,
\[
  \| q \|_{L^r(X,E)}^r = \sum_{K \in \calT_h } |K| |q(K)|^r = \sum_{K \in \calT_h} |K| |\bq_{h|K}|^r = \sum_{K \in \calT_h} \int_K |\bq_h|^r \diff x 
  = \| \bq_h \|_{\bL^r}^r.
\]
Define the mapping $T:L^r(X,E) \rightarrow L^r(X,E)$ by $T: s \leftrightarrow \bs_h \mapsto \bg_h \leftrightarrow g$, where $\bg_h$ comes from the discrete Hodge decomposition of $\bs_h$. Theorem~\ref{thm:discHodge} then shows that this mapping is continuous, \ie
\[
  \| T s \|_{L^r(X,E)} = \| \bg_h \|_{\bL^r} \leq C \| \bs_h \|_{\bL^r} = C \| s \|_{L^r(X,E)} \quad \forall r \in (1,\infty),
\]
where the constant is uniform in any subset of $(1,\infty)$ that is bounded away from $1$.

Given $u_h \in \polX_h$, set $w\leftrightarrow \bw_h = \GRAD u_h$, so that $T(w) =0$. 
Theorem~\ref{thm:Iwaniec} with $r=p$ and $\vare = p-2$ implies the result.
\end{proof}

The second application of the discrete Hodge decomposition concerns the properties of the flux. Namely, we obtain bounds on the discretely divergence free part of $A_h \GRAD u_h$. Notice that if we denote this vector field by $\bell_h$, and we assume that $A \in L^\infty(\Omega)$, we would immediately obtain a bound of the form $\|\bell_h\|_{\bL^p} \leq \|A\|_{L^\infty}\|\GRAD u_h \|_{\bL^p}$. Since we are not assuming that our coefficients are bounded, more care is needed to bound the norm of this field, and the size of the coefficient must be replaced by the size of its oscillation. This estimate, although similar in nature, requires quite different techniques and exploits in an essential way the fact that $A$ has bounded mean oscillation.

\begin{corollary}[decomposition of the flux]
\label{col:boundlh}
Given $u_h \in \polX_h$ construct $A_h \GRAD u_h \in \polQ_h$. Denote by $\bell_h \in \polQ_h$ the discretely divergence free part of the discrete Hodge decomposition of $A_h \GRAD u_h$. Then, the following estimate holds
\[
  \| \bell_h \|_{\bL^p} \leq C |A|_\bmo \|\GRAD u_h \|_{\bL^p}
\]
for a constant $C$ independent of $h$, $A$ and $u_h$.
\end{corollary}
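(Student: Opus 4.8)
The plan is to exploit the discrete Hodge decomposition of $A_h\GRAD u_h$ together with the observation that, because $A$ has bounded mean oscillation, the difference $A_h - (A)_Q$ (for a suitable large cube $Q$ containing $\Omega$, or locally) is small in an averaged sense, and that $\GRAD u_h$ itself is discretely divergence-free's ``gradient part'' — so subtracting a constant matrix multiple of $\GRAD u_h$ costs nothing. Concretely, write $\bell_h$ as the discretely divergence-free part of $A_h\GRAD u_h$. Since $\GRAD u_h$ is exactly the gradient part of its own Hodge decomposition, for any \emph{constant} matrix $\bar A$ we have that $\bar A\,\GRAD u_h$ has discretely divergence-free part equal to the divergence-free part of $\GRAD u_h$, which is $0$ — wait, that is false unless $\bar A$ is a multiple of the identity. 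So instead I would use the characterization of $\bell_h$ by projection: $\bell_h$ is characterized as the element of $\polQ_h$ with $\int_\Omega\scl\bell_h,\GRAD\zeta_h\scr=0$ for all $\zeta_h\in\polX_h$, and $A_h\GRAD u_h-\bell_h=\GRAD\psi_h$ for some $\psi_h\in\polX_h$. Testing the decomposition against $\GRAD\psi_h$ and using the $\bL^q$--$\bL^p$ duality pairing together with Theorem~\ref{thm:discHodge}, one reduces the estimate to controlling $\int_\Omega\scl A_h\GRAD u_h,\GRAD\psi_h\scr$ for $\psi_h$ in a ball.

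The key step — and the place where $\BMO$ enters essentially — is the following. For the test function $\psi_h$ arising above, let $\bt_h\in\polQ_h$ be the discretely divergence-free part of $|\GRAD\psi_h|^{q-2}\GRAD\psi_h$ (the $\bL^p$ Hodge decomposition), and exploit the duality realization $\|\bell_h\|_{\bL^p}=\sup\{\int_\Omega\scl\bell_h,\br_h\scr : \br_h\in\polQ_h\ \text{div.\ free},\ \|\br_h\|_{\bL^q}\le1\}$. Since $\bell_h$ and $\br_h$ are both discretely divergence-free while $A_h\GRAD u_h-\bell_h$ is a discrete gradient, orthogonality gives $\int_\Omega\scl\bell_h,\br_h\scr=\int_\Omega\scl A_h\GRAD u_h,\br_h\scr$. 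Now on each simplex $K$, $A_h|_K=\fint_K A$, so $\int_K\scl A_h\GRAD u_h,\br_h\scr=\int_K\scl(\fint_K A)\GRAD u_h,\br_h\scr$; subtract off a global constant matrix $\bar A:=(A)_{Q_0}$ where $Q_0\supset\Omega$, writing $A_h-\bar A$. The term with $\bar A$ vanishes after summation because $\br_h$ is discretely divergence-free and $\bar A\GRAD u_h=\GRAD(\bar A\cdot\text{?})$ — this only works componentwise if one is careful; more robustly, I would instead split $\br_h$ rather than $u_h$: since $\br_h=|\GRAD\psi_h|^{q-2}\GRAD\psi_h-\GRAD(\text{something})$... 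Let me simply estimate directly: $|\int_\Omega\scl A_h\GRAD u_h,\br_h\scr|=|\sum_K\int_K\scl(\fint_K(A-\bar A))\GRAD u_h,\br_h\scr|$ once the $\bar A$ contribution is shown to cancel against the divergence-free condition tested with the piecewise-linear potential of $\bar A^{\mathsf T}\br_h$'s gradient part. Then on each $K$, $|\fint_K(A-\bar A)|\le\fint_K|A-A_K|+|A_K-\bar A|\lesssim|A|_{\bmo}\cdot(\text{a John--Nirenberg/telescoping factor})$; but that factor may grow logarithmically in $|Q_0|/|K|$, which is \emph{not} uniform in $h$.

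To avoid the logarithm, the right move — and the genuine heart of the argument — is to \emph{not} subtract a single global constant but to use the pointwise bound via maximal functions. On each $K$ we have $|\fint_K A(z)\,\diff z - A_K|=0$, so the oscillation we actually face is $\fint_K|A(z)-A_K|\,\diff z\le|A|_{\bmo}$ directly, \emph{provided} we can insert $A_K$ at the right place. Concretely, rewrite the bilinear form as $\int_\Omega\scl(A_h-A)\GRAD u_h,\br_h\scr+\int_\Omega\scl A\GRAD u_h,\br_h\scr$; the second integral is again not obviously bounded since $A\in\BMO\not\subset L^\infty$ and $\GRAD u_h,\br_h\in\bL^2$ only barely. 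The resolution is to keep everything piecewise and observe $\int_K\scl A_h\GRAD u_h,\br_h\scr=\scl(\fint_K A)\,(\GRAD u_h)|_K,(\br_h)|_K\scr|K|$ and bound $|(\fint_K A)\,\xi\cdot\eta|$ by using that $\fint_K A=A_K$ together with the $\bL^r$ boundedness of $\calM_{\calT_h}$ (Lemma~\ref{lem:maximals}) and Lemma~\ref{lem:approxbmo}; the $\bmo$ seminorm enters through the sharp maximal function $\calM^\sharp$ and the Fefferman--Stein inequality \eqref{eq:FefStein}: one writes $\|A_h\GRAD u_h\|_{\bL^p}\le\|(A_h-\text{cellwise average stuff})\|\cdots$ and invokes $\|\calM A\|_{L^s}\le C\|\calM^\sharp A\|_{L^s}=C|A|_{\bmo}|\Omega|^{1/s}$ on bounded $\Omega$. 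I expect the main obstacle to be exactly this: producing the bound with $|A|_{\bmo}$ rather than $\|A\|_{\bmo}$ (seminorm, not norm) and with a constant \emph{independent of $h$} — i.e.\ showing the logarithmic blow-up from naive telescoping is spurious. The way I would settle it is: apply Theorem~\ref{thm:Iwaniec}-style/Hodge orthogonality to replace $\br_h$ by $|\GRAD\psi_h|^{q-2}\GRAD\psi_h$ modulo a gradient, reduce to estimating $\sum_K\int_K\scl(A-A_K)\GRAD u_h,|\GRAD\psi_h|^{q-2}\GRAD\psi_h\scr$, then on each $K$ bound $|\GRAD u_h|$ and $|\GRAD\psi_h|^{q-1}$ by their (constant) values, pull out $\fint_K|A-A_K|\le|A|_{\bmo}$, apply Hölder over $K$ and then over the sum with exponents $p$ and $q$, and finally use the $\bL^p$--$\bL^q$ stability of the Hodge projection (Theorem~\ref{thm:discHodge}) and $\|\GRAD\psi_h\|_{\bL^q}\lesssim\|\br_h\|_{\bL^q}^{1/(q-1)}\le1$ to close. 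This yields $\|\bell_h\|_{\bL^p}\le C|A|_{\bmo}\|\GRAD u_h\|_{\bL^p}$ with $C$ depending only on $p$, the Hodge constant, and shape-regularity, as claimed.
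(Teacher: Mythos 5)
Your set-up is fine as far as it goes: realizing $\| \bell_h \|_{\bL^p}$ by testing against discretely divergence-free $\br_h \in \polQ_h$ with $\| \br_h \|_{\bL^q} \le 1$ and using orthogonality to replace $\bell_h$ by $A_h \GRAD u_h$ is legitimate, and it is essentially equivalent to the first line of the paper's proof, namely $\| \bell_h \|_{\bL^p} \le C \| A_h \GRAD u_h \|_{\bL^p}$ from Theorem~\ref{thm:discHodge}. The genuine gap comes after that. Your final plan reduces matters to estimating $\sum_K \int_K \scl (A - A_K) \GRAD u_h , |\GRAD \psi_h|^{q-2} \GRAD \psi_h \scr \diff x$, but this quantity is identically zero: $\GRAD u_h$ and $\GRAD \psi_h$ are constant on each $K$, so each summand equals $\scl \bigl( \int_K (A - A_K) \diff x \bigr) \GRAD u_{h|K}, \cdot \scr = 0$. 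Bounding it by $|A|_\bmo$ therefore says nothing about the quantity you actually must control, $\int_\Omega \scl A_h \GRAD u_h, \br_h \scr \diff x = \sum_K |K| \scl A_K \GRAD u_{h|K}, \br_{h|K} \scr$, which carries the full size of the cell averages $A_K$, not their oscillation; no step in the proposal converts size into oscillation. (The closing inequality $\| \GRAD \psi_h \|_{\bL^q} \lesssim \| \br_h \|_{\bL^q}^{1/(q-1)}$ is also garbled: $\psi_h$ came from the Hodge decomposition of $A_h \GRAD u_h$, not of anything built from $\br_h$, and the stability in Theorem~\ref{thm:discHodge} is linear.)

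The missing ingredient is precisely the one you mention in passing and then drop: a global, $L^p$-level mechanism relating $\calM$ to $\calM^\sharp$. The paper's proof is short: $\| \bell_h \|_{\bL^p} \le C \| A_h \GRAD u_h \|_{\bL^p}$, then the pointwise bound $|A_{h|K}| \le \calM_{\calT_h} A \le C \calM A$ (Lemma~\ref{lem:maximals}), then the Fefferman--Stein inequality \eqref{eq:FefStein} to pass from $\| \calM A \|_{L^p}$ to $\| \calM^\sharp A \|_{L^p} \le |\Omega|^{1/p} |A|_\bmo$. Your worry about logarithmic loss from telescoping against a global constant is well founded, and in fact no purely cellwise-oscillation argument can work: for a constant, non-scalar matrix $A$ every local oscillation $\fint_K |A - A_K| \diff x$ vanishes, yet $A \GRAD u_h$ is in general not a discrete gradient (its tangential component jumps across interelement edges), so $\bell_h \neq 0$. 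This shows that the bound cannot be closed by inserting $A_K$ cell by cell; any complete proof has to invoke a global comparison of maximal functions (as the paper does), or some substitute normalization of $A$, and the proposal as written never deploys one.
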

\begin{proof}
Theorem~\ref{thm:discHodge} readily yields that $\| \bell_h \|_{\bL^p} \leq C \| A_h \GRAD u_h \|_{\bL^p}$. We now need to estimate the norm of the flux.

Since $\bell_h$ is piecewise constant,
\begin{equation}
\label{eq:maxest}
  |\Omega| \| \bell_h \|_{\bL^p}^p = \sum_{K \in \calT_h} |K| |\GRAD u_{h|K}|^p \int_\Omega | A_{h|K}|^p \diff x,
\end{equation}
and
\[
  |A_{h|K}| = \left|\fint_K A(z) \diff z \right| \leq \calM_{\calT_h}A \leq C \calM A,
\]
where we used Lemma~\ref{lem:maximals}. Therefore, we can continue \eqref{eq:maxest} as follows
\begin{align*}
  |\Omega| \| \bell_h \|_{\bL^p}^p &\leq C \sum_{K \in \calT_h} \| \GRAD u_h \|_{\bL^p(K)}^p \int_\Omega |\calM A(x)|^p \diff x 
  \leq C \| \GRAD u_h \|_{\bL^p}^p \| \calM^\sharp A\|_{L^p}^p 
\end{align*}
where the last inequality uses the Fefferman-Stein inequality \eqref{eq:FefStein}. Recalling that
\[
  \| \calM^\sharp A\|_{L^p}^p \leq |\Omega| \| \calM^\sharp A \|_{L^\infty}^p = |\Omega| |A|_\bmo^p 
\]
yields the result.
\end{proof}

\section{Finite element discretization and convergence}
\label{sec:MKE}

This section is the core of our work. Here we will show that a finite element approximation of \eqref{eq:bvp} is convergent. We define $u_h \in \polX_h$ as the solution of
\begin{equation}
  \int_\Omega \scl A_h \GRAD u_h, \GRAD \zeta_h \scr \diff x = \int_\Omega \scl \fe, \GRAD \zeta_h \scr \diff x \quad \forall \zeta_h \in \polX_h.
\label{eq:feapprox}
\end{equation}

\subsection{A priori estimate for $p\geq 2$}
\label{sub:pgeq2}
For the time being we will assume that $p\geq 2$ and that $\fe \in \bLp$. Setting $\zeta_h = u_h$ in \eqref{eq:feapprox} immediately yields existence and uniqueness of $u_h$. It remains then to analyze its convergence properties. We begin with an a priori estimate for the gradient in $\bLp$.

\begin{theorem}[a priori estimate]
\label{thm:apriori}
Let $p \geq 2$ and $\fe \in \bLp$. There is a constant $\vare^\star = \vare^\star(|A|_\bmo)$ such that if $|p-2|<\vare^\star$ and $u_h \in \polX_h$ solves \eqref{eq:feapprox}, then
\[
  \| \GRAD u_h \|_{\bL^p} \leq C \| \fe \|_{\bL^p},
\]
where the constant $C$ depends on $|A|_\bmo$, but it is independent of $h$.
\end{theorem}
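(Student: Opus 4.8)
The strategy is a duality/testing argument combined with the two Hodge-decomposition corollaries to close an estimate on $\|\GRAD u_h\|_{\bL^p}$. Since $p\geq 2$, the natural quantity to control is $\|\GRAD u_h\|_{\bL^p}^{p-1} = \|\GRAD u_h\|_{\bL^p}^{p/q}$, and I would get at it by pairing the flux with the $\bL^q$-conjugate of $\GRAD u_h$. Concretely, set $\bs_h = |\GRAD u_h|^{p-2}\GRAD u_h \in \polQ_h$, which satisfies $\int_\Omega \scl \bs_h, \GRAD u_h \scr \diff x = \|\GRAD u_h\|_{\bL^p}^p$ and $\|\bs_h\|_{\bL^q}^q = \|\GRAD u_h\|_{\bL^p}^p$. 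Apply the discrete Hodge decomposition (Theorem~\ref{thm:discHodge}) in $\bL^q(\Omega)$ to write $\bs_h = \GRAD\varphi_h + \bg_h$ with $\varphi_h \in \polX_h$ and $\bg_h$ discretely divergence free.

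\textbf{Main computation.} Now compute $\|\GRAD u_h\|_{\bL^p}^p = \int_\Omega \scl \bs_h, \GRAD u_h\scr \diff x = \int_\Omega \scl \GRAD\varphi_h, \GRAD u_h\scr \diff x + \int_\Omega \scl \bg_h, \GRAD u_h\scr \diff x$. For the first term, I want to use the scheme \eqref{eq:feapprox}, but the test function there pairs against $A_h\GRAD u_h$, not $\GRAD u_h$; so instead I would run the argument with the flux. The cleaner route: test \eqref{eq:feapprox} with $\zeta_h = \varphi_h$ to get $\int_\Omega \scl A_h\GRAD u_h, \GRAD\varphi_h\scr = \int_\Omega \scl \fe, \GRAD\varphi_h\scr$. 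Then decompose $A_h\GRAD u_h = \GRAD\psi_h + \bell_h$ via Theorem~\ref{thm:discHodge} as well, where $\bell_h$ is the discretely divergence-free flux component controlled by Corollary~\ref{col:boundlh}. Combining the pairings and using that $\bg_h$ and $\bell_h$ are each discretely divergence free (so they annihilate $\GRAD\psi_h$ and $\GRAD\varphi_h$ respectively), one obtains an identity of the schematic form
\[
  \|\GRAD u_h\|_{\bL^p}^p = \int_\Omega \scl \fe, \GRAD\varphi_h \scr \diff x - \int_\Omega \scl \bell_h, \bg_h\scr \diff x + (\text{terms in } \bg_h \text{ against } \GRAD u_h).
\]
The orthogonality structure of the decompositions is what makes the cross terms collapse; I would be careful to track exactly which pairings survive. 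The term $\int_\Omega\scl\fe,\GRAD\varphi_h\scr$ is bounded by $\|\fe\|_{\bL^p}\|\GRAD\varphi_h\|_{\bL^q} \leq C\|\fe\|_{\bL^p}\|\bs_h\|_{\bL^q} = C\|\fe\|_{\bL^p}\|\GRAD u_h\|_{\bL^p}^{p/q}$. The term $\int_\Omega\scl\bell_h,\bg_h\scr$ is bounded by $\|\bell_h\|_{\bL^p}\|\bg_h\|_{\bL^q} \leq C|A|_\bmo\|\GRAD u_h\|_{\bL^p}\cdot C(p;p^-,p^+)|p-2|\,\|\GRAD u_h\|_{\bL^p}^{p/q}$, using Corollaries~\ref{col:boundlh} and \ref{cor:boundgh}. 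Since $p/q = p-1$, the right-hand side has the total power $\|\GRAD u_h\|_{\bL^p}^{1 + p/q} = \|\GRAD u_h\|_{\bL^p}^p$ on the bad term, matching the left.

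\textbf{Closing the estimate.} This yields an inequality of the form $\|\GRAD u_h\|_{\bL^p}^p \leq C\|\fe\|_{\bL^p}\|\GRAD u_h\|_{\bL^p}^{p-1} + C\,|A|_\bmo\, C(p;p^-,p^+)\,|p-2|\,\|\GRAD u_h\|_{\bL^p}^p$. The coefficient $C(p;p^-,p^+)$ from Theorem~\ref{thm:Iwaniec} blows up as $p\to p^\pm$, so one first fixes $p^\pm$ (say $p^- = 2 - 1$, $p^+ = 2 + 1$, or any fixed window) to make $C(p;p^-,p^+)$ bounded for $p$ in a neighborhood of $2$; then one chooses $\vare^\star = \vare^\star(|A|_\bmo)$ small enough that $C\,|A|_\bmo\,C(p;p^-,p^+)\,|p-2| \leq \tfrac12$ whenever $|p-2| < \vare^\star$. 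Absorbing that term to the left gives $\tfrac12\|\GRAD u_h\|_{\bL^p}^p \leq C\|\fe\|_{\bL^p}\|\GRAD u_h\|_{\bL^p}^{p-1}$, hence $\|\GRAD u_h\|_{\bL^p} \leq C\|\fe\|_{\bL^p}$ after dividing (the case $\GRAD u_h = 0$ being trivial). The constant $C$ depends on $|A|_\bmo$ through the absorption step but is independent of $h$ since every constant invoked (Hodge stability, Fefferman--Stein, Iwaniec) is $h$-independent.

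\textbf{Anticipated obstacle.} The delicate point is bookkeeping the two simultaneous Hodge decompositions and verifying that the cross terms vanish by discrete divergence-freeness exactly as claimed — in particular making sure the pairing $\int_\Omega\scl\bell_h, \bg_h\scr$ is the only surviving ``bad'' interaction and that it is genuinely multiplied by $|p-2|$ via Corollary~\ref{cor:boundgh}, because that $|p-2|$ factor is the entire mechanism that lets one absorb. If instead one tried to bound $\|A_h\GRAD u_h\|_{\bL^p}$ naively one would only get $C|A|_\bmo$ without the small factor and the absorption would fail; so the interplay between the $|p-2|$ smallness (from nonlinear interpolation) and the $|A|_\bmo$ size (from Fefferman--Stein) is the crux, and I would present that pairing explicitly rather than hand-waving it.
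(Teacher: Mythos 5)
Your overall strategy is exactly the paper's: conjugate vector field $\bs_h=|\GRAD u_h|^{p-2}\GRAD u_h$, discrete Hodge decomposition of $\bs_h$ in $\bL^q(\Omega)$ and of the flux $A_h\GRAD u_h$ in $\bLp$, the bounds of Corollaries~\ref{cor:boundgh} and~\ref{col:boundlh} producing the factor $|p-2|\,|A|_\bmo$, and absorption for $|p-2|$ small with $p^\pm$ fixed so that $C(p;p^-,p^+)$ stays uniform. That part, including the constant bookkeeping, is right.

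There is, however, a concrete gap in your ``main computation,'' precisely at the spot you flag as delicate. Your starting identity pairs $\bs_h$ with $\GRAD u_h$ \emph{without} the coefficient, and such a pairing can never be connected to the scheme \eqref{eq:feapprox}, whose left-hand side carries $A_h$; note also that $\int_\Omega\scl\bg_h,\GRAD u_h\scr\diff x=0$ exactly (since $u_h\in\polX_h$), so what you are left with is $\|\GRAD u_h\|_{\bL^p}^p=\int_\Omega\scl\GRAD\varphi_h,\GRAD u_h\scr\diff x$, which is not $\int_\Omega\scl\fe,\GRAD\varphi_h\scr\diff x$. Consequently your schematic identity (an equality, with a minus sign on $\int\scl\bell_h,\bg_h\scr$ and an unspecified leftover) is not correct as written. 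The missing ingredient is the coercivity of $A_h$: by Lemma~\ref{lem:approxbmo} the piecewise-constant average $A_h$ inherits the coercivity constant $\alpha$ of $A$, so pointwise $\scl A_h\GRAD u_h,\GRAD u_h\scr|\GRAD u_h|^{p-2}\geq\alpha|\GRAD u_h|^p$, and integrating gives
\[
  \alpha\,\|\GRAD u_h\|_{\bL^p}^p \leq \int_\Omega \scl A_h\GRAD u_h,\bs_h\scr\diff x
  = \int_\Omega\scl \fe,\GRAD\varphi_h\scr\diff x + \int_\Omega\scl\bell_h,\bg_h\scr\diff x,
\]
where the equality uses the scheme with $\zeta_h=\varphi_h$ for the $\GRAD\varphi_h$ part and the flux decomposition $A_h\GRAD u_h=\GRAD\psi_h+\bell_h$ together with the discrete divergence-freeness of $\bg_h$ (tested against $\GRAD\psi_h$, $\psi_h\in\polX_h$) for the $\bg_h$ part. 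With this corrected starting inequality your subsequent bounds and the absorption argument go through verbatim and reproduce the paper's proof; without the coercivity step the estimate of $\|\GRAD u_h\|_{\bL^p}^p$ never gets off the ground.
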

\begin{proof}
Let $\bs_h = |\GRAD u_h |^{p-2} \GRAD u_h$. Since $u_h \in \polX_h$, then $\bs_h \in \polQ_h$. Apply the discrete Hodge decomposition of Theorem~\ref{thm:discHodge} to $\bs_h$ in $\bL^q(\Omega)$, with $\tfrac1p + \tfrac1q = 1$, to obtain that there are unique $\phi_h \in \polX_h$, $\bg_h \in \polQ_h$ such that
\[
  \bs_h = \GRAD \phi_h + \bg_h, \qquad \int_\Omega \scl \bg_h, \GRAD \zeta_h \scr \diff x = 0 \quad \forall \zeta_h \in \polX_h.
\]
Set $\zeta_h = \phi_h$ in \eqref{eq:feapprox}. From Lemma~\ref{lem:approxbmo} we have that $A$ and $A_h$ have the same coercivity \eqref{eq:Apositive} constant $\alpha$, thereby obtaining
\begin{equation}
\label{eq:Lpest}
  \begin{aligned}
    \alpha \int_\Omega |\GRAD u_h |^p &\leq \int_\Omega \scl A_h \GRAD u_h, |\GRAD u_h |^{p-2} \GRAD u_h \scr \diff x \\
    &= \int_\Omega \scl \fe, \GRAD \phi_h \scr \diff x
    + \int_\Omega \scl A_h \GRAD u_h, \bg_h \scr \diff x.
  \end{aligned}
\end{equation}

We estimate each term on the right hand side of this inequality separately. Using the estimates of Theorem~\ref{thm:discHodge} yields
\[
  \int_\Omega \scl \fe, \GRAD \phi_h \scr \diff x \leq \| \fe \|_{\bL^p} \| \GRAD \phi_h \|_{\bL^q} \leq C \| \fe \|_{\bL^p} \| \bs_h \|_{\bL^q}.
\]
Using the relation between $p$ and $q$ and the definition of $\bs_h$ we obtain
\[
  \int_\Omega |\bs_h|^q \diff x = \int_\Omega \left| |\GRAD u_h|^{p-2} \GRAD u_h \right|^q \diff x = \int_\Omega |\GRAD u_h |^p \diff x,
\]
which allows us to conclude
\[
  \int_\Omega \scl \fe, \GRAD \phi_h \scr \diff x \leq C \| \fe \|_{\bL^p} \|\GRAD u_h \|_{\bL^p}^{p/q}.
\]

To estimate the second term in \eqref{eq:Lpest} we apply the discrete Hodge decomposition of Theorem~\ref{thm:discHodge} to $A_h \GRAD u_h \in \polQ_h$ in $\bLp$. In doing so we obtain $A_h \GRAD u_h = \GRAD \psi_h + \bell_h$ with $\psi_h \in \polX_h$ and $\bell_h$ discretely divergence free. This decomposition also yields
\[
  \int_\Omega \scl A_h \GRAD u_h, \bg_h \scr \diff x = \int_\Omega \scl \bell_h , \bg_h \scr \diff x \leq \| \bell_h \|_{\bL^p} \| \bg_h \|_{\bL^q}.
\]
To proceed we must invoke the estimates of Corollary~\ref{cor:boundgh} and Corollary~\ref{col:boundlh}, so that
\[
  \int_\Omega \scl A_h \GRAD u_h, \bg_h \scr \diff x \leq \| \bell_h \|_{\bL^p} \| \bg_h \|_{\bL^q}
  \leq C |p-2| |A|_\bmo \| \GRAD u_h \|_{\bL^p}^{p}.
\]
Since $1+\tfrac{p}{q}=p$. Notice that because $p\geq 2$, the constant $C(p,p^-,p^+)$ of Corollary~\ref{cor:boundgh} can be chosen uniform in $p$.

These estimates allow us to rewrite \eqref{eq:Lpest} as 
\[
  \| \GRAD u_h \|_{\bL^p}^p \leq C \left( \| \fe \|_{\bL^p} \| \GRAD u_h \|_{\bL^p}^{p-1} + |p-2||A|_\bmo \| \GRAD u_h \|_{\bL^p}^p \right).
\]
Setting $|p-2|$ sufficiently small the coefficient of the second term on the right hand side can be made less than one (this will depend on $|A|_\bmo$). This yields the result.
\end{proof}

\begin{remark}[range of $p$]
\label{rem:rangep}
The conclusion of Theorem~\ref{thm:apriori} is similar in nature to those of \cite{MR1837269,MR2110431} in the sense that we have \eqref{eq:feapprox} well posed over a range of $p$ that depends on the size of the oscillation of the coefficient. This bound seems natural because the size of the coefficients is not involved (one can multiply the equation by any constant) but rather the size of their oscillation, which is the generalization of the ratio $\alpha/M$ between the largest and smallest eigenvalues for a bounded coefficient and the quantity the $\bLp$ estimates on the gradient of Meyers \cite{MR0159110} depend on.
\end{remark}

\begin{remark}[the need for a nonstandard approach]
\label{rem:LM}
At the beginning of this section we established existence and uniqueness of $u_h$ by setting the test function $\zeta_h = u_h$. Another way of realizing this is via a standard application of the Lax Milgram lemma. To do this it was imperative to approximate the coefficient $A$ by piecewise constants $A_h$, so that the associated bilinear form is bounded. This bound, however, cannot be uniform in $h$. This shows that even for the case $p=2$ the a priori estimate of Theorem~\ref{thm:apriori} is nontrivial.
\end{remark}

\subsection{Convergence for $p\geq2$}
\label{sub:convpgeq2}

The a priori estimate obtained in Theorem~\ref{thm:apriori} shows that the family of discrete solutions $\{u_h\}_{h>0}$ is uniformly bounded in $W^1_0(\Lp)$. Consequently there is a sequence $\{u_{h_k}\}_{k \in \polN} \subset \{u_h\}_{h>0}$ and $\hu \in W^1_0(\Lp)$ such that $\GRAD u_{h_k} \rightharpoonup \GRAD \hu$ in $\bLp$ and $u_{h_k} \to \hu$ in $\Lp$. Moreover, weak lower semicontinuity of the norm yields
\[
  \| \GRAD \hu \|_{\bL^p} \leq C \| \fe \|_{\bL^p}.
\]
It remains then to show that this limit solves \eqref{eq:bvp}. From uniqueness it will follow that $\hu = \ue$.

\begin{proposition}[weak convergence]
\label{prop:wconv}
There is a sequence $\{u_{h_k}\}_{k \in \polN} \subset \{u_h\}_{h>0}$ and $\hu \in W^1_0(\Lp)$ such that, as $k \to \infty$, the sequence converges weakly in $W^1_0(\Lp)$ and strongly in $\Lp$ to $\hu$. Moreover, the function $\hu \in W^1_0(\Lp)$ solves \eqref{eq:bvp}.
\end{proposition}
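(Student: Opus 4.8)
The plan is to combine the uniform bound of Theorem~\ref{thm:apriori} with a standard weak compactness argument and then to pass to the limit in \eqref{eq:feapprox} tested against interpolants of smooth functions; the only genuinely nonstandard point is how the unbounded coefficient is handled in the limit.

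\emph{Extraction of a limit.} By Theorem~\ref{thm:apriori} the family $\{u_h\}_{h>0}$ is bounded in $W^1_0(\Lp)$, which is reflexive since $1<p<\infty$; hence a subsequence satisfies $\GRAD u_{h_k}\rightharpoonup\GRAD\hu$ weakly in $\bLp$ for some $\hu\in W^1_0(\Lp)$, and the compact embedding $W^1_0(\Lp)\hookrightarrow\Lp$ (Rellich--Kondrachov, valid on the bounded Lipschitz domain $\Omega$) gives, after extracting a further non-relabeled subsequence, $u_{h_k}\to\hu$ strongly in $\Lp$. Weak lower semicontinuity of the norm yields $\|\GRAD\hu\|_{\bL^p}\leq C\|\fe\|_{\bL^p}$. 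It remains to show that $\hu$ solves \eqref{eq:bvp}.

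\emph{Passing to the limit.} Fix $\zeta\in C^\infty_0(\Omega)$ and let $\zeta_{h_k}\in\polX_{h_k}$ be, e.g., its Lagrange interpolant. Using shape regularity together with the density and mesh assumptions of \S\ref{sub:fem}, $\GRAD\zeta_{h_k}\to\GRAD\zeta$ in $\bL^r(\Omega)$ for every $r\in[1,\infty)$ and $\{\GRAD\zeta_{h_k}\}$ is bounded in $\bL^\infty(\Omega)$. Testing \eqref{eq:feapprox} with $\zeta_h=\zeta_{h_k}$, the right-hand side converges to $\int_\Omega\scl\fe,\GRAD\zeta\scr\diff x$ because $\fe\in\bLp$ and $\GRAD\zeta_{h_k}\to\GRAD\zeta$ in $\bL^q$ (note $q<\infty$ since $p\geq2$). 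For the left-hand side I would first use the symmetry of $A_{h_k}$ to move the coefficient onto the smooth test function, writing the integrand as $\scl\GRAD u_{h_k},A_{h_k}\GRAD\zeta_{h_k}\scr$. Since $A_{h_k}\to A$ in $L^r(\Omega)$ for every finite $r$ by Lemma~\ref{lem:approxbmo}, a H\"older estimate combined with the $\bL^r$-convergence and $\bL^\infty$-boundedness of $\GRAD\zeta_{h_k}$ shows $A_{h_k}\GRAD\zeta_{h_k}\to A\GRAD\zeta$ strongly in $\bL^q(\Omega)$; pairing this against $\GRAD u_{h_k}\rightharpoonup\GRAD\hu$ in $\bLp$ then gives $\int_\Omega\scl A_{h_k}\GRAD u_{h_k},\GRAD\zeta_{h_k}\scr\diff x\to\int_\Omega\scl\GRAD\hu,A\GRAD\zeta\scr\diff x=\int_\Omega\scl A\GRAD\hu,\GRAD\zeta\scr\diff x$, the last quantity being finite because $A\in L^q$ and $\GRAD\hu\in\bLp$. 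Hence $\hu$ satisfies the variational identity for every $\zeta\in C^\infty_0(\Omega)$, and the well-posedness results of \cite{MR1837269,MR2110431} identify $\hu=\ue$; a routine subsequence argument then upgrades this to convergence of the whole family $\{u_h\}_{h>0}$.

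\emph{Main obstacle.} The delicate step is the flux term: because $A$ is unbounded one cannot pair the weakly convergent $\GRAD u_{h_k}$ directly against $A_{h_k}\GRAD u_{h_k}$ (there is no duality between $L^r$ and $\bLp$ with both factors only controlled as stated), and the argument only closes because symmetry lets the unbounded factor act on a smooth function, while Lemma~\ref{lem:approxbmo} --- ultimately the John--Nirenberg inequality --- supplies the $L^r$-convergence of $A_{h_k}$ for all finite $r$. A secondary technicality is the approximability of $\zeta$ by discrete functions whose gradients converge in every $\bL^r$ and stay bounded in $\bL^\infty$, which relies on the structural hypotheses imposed on the (non--quasi-uniform) meshes in \S\ref{sub:fem}.
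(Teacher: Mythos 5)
Your proposal is correct and follows essentially the same route as the paper: the a priori bound of Theorem~\ref{thm:apriori} gives the weak limit, and the PDE is recovered by testing \eqref{eq:feapprox} with Lagrange interpolants of smooth functions, using symmetry to place the coefficient on the test function, the $L^r$-convergence of $A_h$ from Lemma~\ref{lem:approxbmo} to get $A_{h_k}\GRAD\zeta_{h_k}\to A\GRAD\zeta$ in $\bL^q(\Omega)$, and the weak--strong pairing with $\GRAD u_{h_k}\rightharpoonup\GRAD\hu$ in $\bLp$. The paper organizes the limit passage by adding and subtracting $A\GRAD\hu$ and $A_h\GRAD u_h$ (the terms $\calS$ and $\calR$), but the ingredients and estimates are the same as yours, so this is only a cosmetic difference.
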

\begin{proof}
The convergence is an immediate corollary of the a priori estimate of Theorem~\ref{thm:apriori}.
To show that $\hu$ solves \eqref{eq:bvp} let $\varphi \in C^\infty_0(\Omega)$. One can construct $\varphi_h \in \polX_h$ such that $\varphi_h \to \varphi$ in $W^1(L^\infty(\Omega))$ as $h \to 0$. Taking the Lagrange interpolant suffices for these purposes \cite{MR0520174,MR2050138}. Thus,
\[
  \int_\Omega \scl A\GRAD \hu, \GRAD \varphi \scr \diff x = \int_\Omega \scl A \GRAD \hu, \GRAD \varphi_h \scr \diff x 
  + \int_\Omega \scl A \GRAD \hu, \GRAD (\varphi - \varphi_h ) \scr \diff x.
\]
As $h \to 0$ the second term can be estimated as
\begin{align*}
  \int_\Omega \scl A \GRAD \hu, \GRAD (\varphi - \varphi_h ) \scr \diff x &\leq \| A \|_{L^q} \| \GRAD \hu \|_{\bL^p} 
  \| \GRAD (\varphi - \varphi_h ) \|_{\bL^\infty} \\ 
  &\leq 
  C \| A \|_{L^q} \| \fe \|_{\bL^p} \| \GRAD (\varphi - \varphi_h ) \|_{\bL^\infty} \to 0,
\end{align*}
where we used the a priori estimate of Theorem~\ref{thm:apriori} and $A \in L^q(\Omega)$.

It remains to deal with the first term
\[
  \int_\Omega \scl A \GRAD \hu, \GRAD \varphi_h \scr \diff x = \int_\Omega \scl A_h \GRAD u_h, \GRAD \varphi_h \scr \diff x
  + \int_\Omega \scl A \GRAD \hu - A_h \GRAD u_h, \GRAD \varphi_h \scr \diff x = \calS + \calR.
\]
Evidently, $\calS = \int_\Omega \scl \fe, \GRAD \varphi_h \scr \diff x$ and, as $h \to 0$, $\calS \to \int_\Omega \scl \fe, \GRAD \varphi \scr \diff x$, so that if we show that $\calR \to 0$ we obtain the result. Notice that
\[
  \calR = \int_\Omega \scl (A - A_h ) \GRAD\hu, \GRAD \varphi_h \scr \diff x + \int_\Omega \scl A_h \GRAD(\hu - u_h ), \GRAD \varphi_h \scr \diff x,
\]
so that, using the results of Lemma~\ref{lem:approxbmo}, we obtain
\[
  \int_\Omega \scl (A - A_h ) \GRAD\hu, \GRAD \varphi_h \scr \diff x \leq \| A - A_h \|_{L^q} \| \GRAD \hu \|_{\bL^p} \| \GRAD \varphi_h \|_{\bL^\infty}
  \to 0
\]
since $\|\GRAD \varphi_h \|_{\bL^\infty} \leq C \| \GRAD \varphi\|_{\bL^\infty}$. For the second term we have that $A_h\tr \GRAD \varphi_h \to A\tr \GRAD \varphi$ in $\bL^q(\Omega)$ and, by passing to a subsequence, $\GRAD u_{h_k} \rightharpoonup \GRAD \hu$ in $\bLp$, hence
\[
  \int_\Omega \scl A_{h_k} \GRAD(\hu - u_{h_k} ), \GRAD \varphi_h \scr \diff x 
    = \int_\Omega \scl \GRAD(\hu - u_{h_k} ), A_{h_k}\tr \GRAD \varphi_{h_k} \scr \diff x \to 0.
\]

In conclusion $\hu$ is a solution of \eqref{eq:bvp}.
\end{proof}

The previous result shows that the weak limit $\hu$ is a solution. Let us now show that strong convergence does take place but in a weaker norm.

\begin{proposition}[strong convergence]
\label{prop:sconv}
For any $\hp \in [2,p)$ we have $u_h \to \hu$ in $W^1_0(L^\hp(\Omega))$.
\end{proposition}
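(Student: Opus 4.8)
The plan is to exploit the fact that $\hu$ is the unique variational solution of \eqref{eq:bvp} in $W^1_0(\Lp)$ (Proposition~\ref{prop:wconv}), that $u_h$ is the Galerkin approximation with the perturbed coefficient $A_h$, and that we have a uniform a priori bound $\|\GRAD u_h\|_{\bL^p} \leq C\|\fe\|_{\bL^p}$ from Theorem~\ref{thm:apriori}. The natural tool for turning a weakly convergent, uniformly bounded family into a strongly convergent one is an inf-sup (LBB) argument on a well-posed linear problem: since $2 \le \hp < p$, the exponent $\hp$ is in the Meyers-type admissible range provided $|p-2|<\vare^\star$, so the bilinear form associated with the \emph{limit} coefficient $A$ is well posed on $W^1_0(L^\hp(\Omega)) \times W^1_0(L^{\hp'}(\Omega))$ (equivalently satisfies a discrete inf-sup condition on $\polX_h \times \polX_h$, uniformly in $h$, using the discrete Hodge decomposition exactly as in Corollaries~\ref{cor:boundgh} and \ref{col:boundlh} but now at exponent $\hp$ rather than at the borderline $p$).

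First I would record the discrete inf-sup estimate: there is $\beta>0$, independent of $h$, such that for all $v_h \in \polX_h$,
\[
  \beta \|\GRAD v_h\|_{\bL^\hp} \leq \sup_{0\neq \zeta_h \in \polX_h} \frac{\int_\Omega \scl A_h \GRAD v_h, \GRAD \zeta_h\scr \diff x}{\|\GRAD \zeta_h\|_{\bL^{\hp'}}},
\]
which follows by taking $\bs_h = |\GRAD v_h|^{\hp-2}\GRAD v_h$, Hodge-decomposing it in $\bL^{\hp'}$, testing against the gradient part $\GRAD\phi_h$, and absorbing the divergence-free remainder with Corollary~\ref{col:boundlh} at exponent $\hp$; because $\hp$ is strictly interior to $(p^-,p^+)$ one has headroom, and the $|A|_\bmo$-smallness that gave $\vare^\star$ is precisely what makes the absorption work. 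Next, let $\tilde u_h \in \polX_h$ be the Galerkin approximation of $\hu$ for the \emph{exact} coefficient $A$, i.e. $\int_\Omega \scl A\GRAD\tilde u_h,\GRAD\zeta_h\scr = \int_\Omega\scl A\GRAD\hu,\GRAD\zeta_h\scr$ for all $\zeta_h$; by the discrete inf-sup together with a Céa-type argument one gets $\|\GRAD(\hu-\tilde u_h)\|_{\bL^\hp}\leq C\inf_{\zeta_h}\|\GRAD(\hu-\zeta_h)\|_{\bL^\hp}\to 0$, using density of $\bigcup_h \polX_h$ in $W^1_0(L^\hp(\Omega))$ and $\GRAD\hu\in\bLp\subset\bL^\hp$. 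Then I would estimate $\|\GRAD(\tilde u_h - u_h)\|_{\bL^\hp}$ via the discrete inf-sup: for any $\zeta_h$,
\[
  \int_\Omega \scl A_h\GRAD(\tilde u_h - u_h),\GRAD\zeta_h\scr \diff x
  = \int_\Omega \scl (A_h - A)\GRAD \tilde u_h, \GRAD\zeta_h\scr \diff x
  + \Big(\int_\Omega\scl A\GRAD\hu,\GRAD\zeta_h\scr - \int_\Omega\scl\fe,\GRAD\zeta_h\scr\Big),
\]
and the last parenthesis vanishes because $\hu$ solves \eqref{eq:bvp}, while the first term is bounded by $\|A-A_h\|_{L^s}\|\GRAD\tilde u_h\|_{\bLp}\|\GRAD\zeta_h\|_{\bL^{\hp'}}$ with $\tfrac1s = \tfrac1\hp-\tfrac1p>0$; since $\|A-A_h\|_{L^s}\to 0$ by Lemma~\ref{lem:approxbmo} and $\|\GRAD\tilde u_h\|_{\bLp}$ is uniformly bounded (inf-sup at exponent $p$ is borderline, so here I would instead bound $\|\GRAD\tilde u_h\|_{\bLp}$ by the a priori bound on $\|\GRAD\hu\|_{\bLp}$ plus the $\bLp$-stability \eqref{eq:W1pstab}-type estimate for the Galerkin projection), the whole right side is $o(1)\|\GRAD\zeta_h\|_{\bL^{\hp'}}$, hence $\|\GRAD(\tilde u_h-u_h)\|_{\bL^\hp}\to 0$. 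Combining the two pieces by the triangle inequality gives $\|\GRAD(\hu-u_h)\|_{\bL^\hp}\to 0$, and Poincaré upgrades this to $W^1_0(L^\hp(\Omega))$ convergence.

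The main obstacle I anticipate is the borderline exponent issue: the $\bLp$ stability of $\tilde u_h$ and, more delicately, the fact that the absorption constant in Corollary~\ref{col:boundlh} and the nonlinear interpolation constant in Corollary~\ref{cor:boundgh} blow up as one approaches the endpoint of the admissible interval. Working at $\hp<p$ rather than at $p$ is exactly what buys the uniform inf-sup constant $\beta$, so the argument must keep $\hp$ strictly inside $(p^-,p^+)$ and never test the limit problem at exponent $p$ itself; care is also needed that the Hölder splitting $\tfrac1s=\tfrac1\hp-\tfrac1p$ is strictly positive so that $\|A-A_h\|_{L^s}\to 0$ is actually available from Lemma~\ref{lem:approxbmo}. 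A secondary technical point is justifying the discrete inf-sup uniformly in $h$ for the coefficient $A_h$ (not $A$): this is immediate from the proof of Theorem~\ref{thm:apriori} reread at exponent $\hp$, since all constants there were independent of $h$ and depended only on $|A|_\bmo = |A_h|_\bmo$-type quantities controlled via Fefferman-Stein.
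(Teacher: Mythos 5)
Your overall skeleton (a uniform discrete inf-sup at the interior exponent $\hp$ for the $A_h$-form, a triangle-inequality splitting, and a mixed-exponent H\"older estimate with $\tfrac1s=\tfrac1\hp-\tfrac1p$ together with $\|A-A_h\|_{L^s}\to0$ from Lemma~\ref{lem:approxbmo}) is exactly the paper's argument. The genuine gap is your intermediate function $\tilde u_h$, the Galerkin projection of $\hu$ with the \emph{exact} coefficient $A$. For fixed $h$ this object exists, but everything you need from it uniformly in $h$ is unavailable: (i) the C\'ea-type bound $\|\GRAD(\hu-\tilde u_h)\|_{\bL^\hp}\leq C\inf_{\zeta_h}\|\GRAD(\hu-\zeta_h)\|_{\bL^\hp}$ requires boundedness of the form $(v,\zeta)\mapsto\int_\Omega\scl A\GRAD v,\GRAD\zeta\scr\diff x$ at the conjugate pair $(\hp,\hq)$, which forces $A\in L^\infty(\Omega)$ --- precisely what is not assumed; H\"older only closes if you give up an exponent gap, i.e.\ measure the approximation error in $\bLp$, and even then you still need (ii) a uniform-in-$h$ inf-sup (equivalently stability) for the discrete problem with coefficient $A$ itself. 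The paper's entire machinery produces this only for $A_h$: the discrete Hodge decomposition and Corollary~\ref{col:boundlh} apply because $A_h\GRAD v_h\in\polQ_h$ is piecewise constant, whereas $A\GRAD v_h$ is not, so ``rereading Theorem~\ref{thm:apriori}'' does not cover the $A$-form, and your appeal to an ``\eqref{eq:W1pstab}-type stability of the Galerkin projection'' for the $A$-weighted form is exactly the nontrivial statement that is proved only for $A_h$ (see also Remark~\ref{rem:LM}: the $A$-form is not uniformly bounded on the discrete spaces). A perturbation argument $\int\scl A\GRAD v_h,\GRAD\zeta_h\scr=\int\scl A_h\GRAD v_h,\GRAD\zeta_h\scr+\int\scl(A-A_h)\GRAD v_h,\GRAD\zeta_h\scr$ does not rescue the inf-sup at a single exponent, since the perturbation is only controlled with the mismatched norm $\|\GRAD v_h\|_{\bLp}$.

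The repair is the paper's choice of comparison function: take $w_h\in\polX_h$ to be a plain interpolant (e.g.\ Cl\'ement) with $w_h\to\hu$ in $W^1_0(\Lp)$, rather than any Galerkin projection. Then the error equation under the $A_h$-form reads $\int_\Omega\scl A_h\GRAD(w_h-u_h),\GRAD\zeta_h\scr\diff x=\int_\Omega\scl A_h(\GRAD w_h-\GRAD\hu),\GRAD\zeta_h\scr\diff x+\int_\Omega\scl(A_h-A)\GRAD\hu,\GRAD\zeta_h\scr\diff x$, and both terms are handled by exactly the H\"older bookkeeping you already wrote down ($\|A_h\|_{L^r}$ uniformly bounded and $\|A-A_h\|_{L^r}\to0$ by Lemma~\ref{lem:approxbmo}, with $r=(\tfrac1\hp-\tfrac1p)^{-1}>1$), followed by the discrete inf-sup for the $A_h$-form at $\hp$; no well-posedness statement for the exact-coefficient discrete problem is ever needed.
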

\begin{proof}
Let $\hp \in [2,p)$. Since $\fe \in \bLp \subset \bL^{\hp}(\Omega)$, problem \eqref{eq:feapprox} is also well posed in $W^1_0(L^\hp(\Omega))$. As it is well known \cite{MR2050138,MR2648380}, this is equivalent to an inf-sup condition
\[
  \| \GRAD w_h \|_{\bL^{\hp}} \leq C \sup_{0 \neq \zeta_h \in \polX_h} 
    \frac{ \int_\Omega \scl A_h \GRAD w_h , \GRAD \zeta_h \scr \diff x} { \| \GRAD \zeta_h \|_{\bL^\hq} },
\]
where $\tfrac1\hp + \tfrac1\hq = 1$ and the constant $C$ is independent of $h$.

Since $\hu \in W^1_0(\Lp)$ there is a family $\{ w_h \in \polX_h \}_{h>0}$ such that $w_h \to \hu$ in $W^1_0(\Lp)$ (the Cl\'ement interpolant suffices \cite{MR2373954}). Consequently,
\[
  \| \GRAD (\hu - u_h) \|_{\bL^\hp} \leq \| \GRAD (\hu - w_h) \|_{\bL^\hp} + \| \GRAD (w_h - u_h) \|_{\bL^\hp}.
\]
By construction, $\| \GRAD (\hu - w_h) \|_{\bL^\hp} \to 0 $ as $h\to 0$. For the second term we use the discrete inf-sup
\[
  \| \GRAD (w_h - u_h) \|_{\bL^\hp} \leq
  C \sup_{0 \neq \zeta_h \in \polX_h} \frac{ \int_\Omega \scl A_h \GRAD(w_h - u_h ), \GRAD \zeta_h \scr \diff x}{\| \GRAD \zeta_h \|_{\bL^\hq}},
\]
from which we conclude
\begin{align*}
  \int_\Omega \scl A_h \GRAD(w_h - u_h ), \GRAD \zeta_h \scr \diff x &=
  \int_\Omega \scl A_h \GRAD w_h , \GRAD \zeta_h \scr \diff x - \int_\Omega \scl \fe, \GRAD \zeta_h \scr \diff x \\
  &= \int_\Omega \scl A_h \GRAD w_h - A \GRAD \hu, \GRAD \zeta_h \scr \diff x \\
  &= \int_\Omega \scl A_h (\GRAD w_h -\GRAD \hu), \GRAD \zeta_h \scr \diff x \\
  &+ \int_\Omega \scl (A_h - A) \GRAD \hu, \GRAD \zeta_h \scr \diff x
\end{align*}
\ie
\begin{multline}
  \int_\Omega \scl A_h \GRAD(w_h - u_h ), \GRAD \zeta_h \scr \diff x =
  \int_\Omega \scl A_h (\GRAD w_h -\GRAD \hu), \GRAD \zeta_h \scr \diff x \\
  + \int_\Omega \scl (A_h - A) \GRAD \hu, \GRAD \zeta_h \scr \diff x.
\label{eq:coeffapprox}
\end{multline}

The first term in \eqref{eq:coeffapprox} can be treated as follows
\[
  \sup_{0 \neq \zeta_h \in \polX_h} \frac{\int_\Omega \scl A_h (\GRAD w_h -\GRAD \hu), \GRAD \zeta_h \scr \diff x}{\| \GRAD \zeta_h \|_{\bL^\hq}}
  \leq \| A_h \|_{L^r} \|\GRAD(w_h - \hu ) \|_{\bL^p} \to 0,
\]
where, since $\hp < p$, $r = (\tfrac1\hp - \tfrac1p)^{-1}>1$ and we used that, by Lemma~\ref{lem:approxbmo}, $A_h$ is bounded in $L^r(\Omega)$ and that $w_h \to \hu$ in $W^1_0(\Lp)$. The second term of \eqref{eq:coeffapprox} is treated similarly
\[
  \sup_{0 \neq \zeta_h \in \polX_h} \frac{\int_\Omega \scl (A_h - A)\GRAD \hu, \GRAD \zeta_h \scr \diff x}{\| \GRAD \zeta_h \|_{\bL^\hq}} \leq 
  \| A_h - A \|_{L^r} \| \GRAD \hu \|_{\bL^p} \to 0.
\]

Collecting the obtained estimates yields the result.
\end{proof}

\subsection{The case $p \in (1,2)$}
\label{sub:ple2}

Let us briefly comment on the case $p \in (1,2)$. In this case we cannot set $\zeta_h = u_h$ in \eqref{eq:feapprox} to obtain existence and uniqueness, since $\fe \not\in \Ldeuxd$. However, owing to the symmetry of $A_h$, we can resort to inf-sup theory \cite{MR2648380} to assert the existence, uniqueness and a priori estimate of a solution. The a priori estimate yields a weakly convergent sequence whose limit can also be shown to be a solution to the problem \eqref{eq:bvp}. This is made precise in the following result.

\begin{theorem}[convergence for $p<2$]
Let $1<p<2$ and $\fe \in \bLp$. There is a constant $\vare^\star$ that depends only on the oscillation of $A$, such that if $|p-2|<\vare^\star$ and $u_h \in \polX_h$ solves \eqref{eq:feapprox}, then for a constant $C=C(|A|_\bmo)$, but independent of $h$
\[
  \| \GRAD u_h \|_{\bL^p} \leq C \| \fe \|_{\bL^p}.
\]
The family $\{ u_h \in \polX_h\}_{h>0}$ converges, as $h \to 0$, weakly in $W^1_0(\Lp)$ and strongly in $\Lp$ to $\hu$, which is a solution to \eqref{eq:bvp}. Finally, for every $\hp \in (2-\vare^\star, p)$ we have $u_h \to \hu$ in $W^1_0(L^\hp(\Omega))$.
\end{theorem}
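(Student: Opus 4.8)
The plan is to reduce the case $p \in (1,2)$ to the already-settled case $p \geq 2$ by duality, using the symmetry of $A_h$. Since $\fe \in \bLp$ with $p<2$, we can no longer set $\zeta_h = u_h$ in \eqref{eq:feapprox}; instead, well-posedness of the discrete problem in $W^1_0(\Lp)$ is equivalent (by the discrete Banach--Ne\v{c}as--Babu\v{s}ka theory, see \cite{MR2648380,MR2050138}) to the discrete inf-sup condition
\[
  \| \GRAD w_h \|_{\bL^p} \leq C \sup_{0 \neq \zeta_h \in \polX_h}
    \frac{ \int_\Omega \scl A_h \GRAD w_h, \GRAD \zeta_h \scr \diff x }{ \| \GRAD \zeta_h \|_{\bL^q} },
    \qquad \tfrac1p + \tfrac1q = 1,
\]
together with the analogous surjectivity (``transposed'') condition. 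Because $A_h$ is symmetric, the transposed bilinear form is the same form with the roles of the arguments swapped, so the transposed inf-sup condition is exactly the forward inf-sup condition with $p$ replaced by its conjugate $q \in (2,\infty)$. Thus the whole matter reduces to establishing, for $q$ in the range $|q-2| < \vare^\star$, the bound
\[
  \| \GRAD \zeta_h \|_{\bL^q} \leq C \sup_{0 \neq w_h \in \polX_h}
    \frac{ \int_\Omega \scl A_h \GRAD w_h, \GRAD \zeta_h \scr \diff x }{ \| \GRAD w_h \|_{\bL^p} }
\]
uniformly in $h$, which is precisely the a priori estimate content of Theorem~\ref{thm:apriori} applied at exponent $q \geq 2$: given $\zeta_h$, choosing $w_h$ to solve \eqref{eq:feapprox} with right-hand side a suitable representative of $|\GRAD \zeta_h|^{q-2}\GRAD \zeta_h$ and invoking Theorem~\ref{thm:apriori} delivers exactly the displayed lower bound. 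The constant $\vare^\star$ is therefore the same one produced in Theorem~\ref{thm:apriori} (or its minimum over the two exponents $p$ and $q$), and it depends only on $|A|_\bmo$.

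Once existence, uniqueness, and the uniform a priori estimate $\| \GRAD u_h \|_{\bL^p} \leq C\|\fe\|_{\bL^p}$ are in hand, the bounded family $\{u_h\}_{h>0} \subset W^1_0(\Lp)$ admits a subsequence converging weakly in $W^1_0(\Lp)$ and strongly in $\Lp$ to some $\hu$, with $\|\GRAD\hu\|_{\bL^p} \leq C\|\fe\|_{\bL^p}$ by weak lower semicontinuity. To identify $\hu$ as a solution of \eqref{eq:bvp}, I would repeat verbatim the argument of Proposition~\ref{prop:wconv}: for $\varphi \in C^\infty_0(\Omega)$ take its Lagrange interpolant $\varphi_h \to \varphi$ in $W^1(L^\infty(\Omega))$, split $\int_\Omega \scl A\GRAD\hu,\GRAD\varphi\scr$ using $\varphi_h$, and on the discrete part write $A\GRAD\hu - A_h\GRAD u_h = (A-A_h)\GRAD\hu + A_h\GRAD(\hu - u_h)$; the first piece vanishes because $\|A-A_h\|_{L^q}\to 0$ (Lemma~\ref{lem:approxbmo}, valid since $q<\infty$) and $\|\GRAD\varphi_h\|_{\bL^\infty}$ is bounded, and the second vanishes because $A_h\tr\GRAD\varphi_h \to A\tr\GRAD\varphi$ in $\bL^q(\Omega)$ while $\GRAD u_{h_k}\rightharpoonup\GRAD\hu$ in $\bLp$. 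None of these steps used $p\geq 2$, so they transfer unchanged. By uniqueness of the variational solution \cite{MR1837269,MR2110431}, $\hu = \ue$ and the whole sequence converges.

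For the strong convergence in $W^1_0(L^\hp(\Omega))$ with $\hp \in (2-\vare^\star, p)$, I would mirror Proposition~\ref{prop:sconv}. Since $\fe \in \bLp \subset \bL^{\hp}(\Omega)$ and $|\hp - 2| < \vare^\star$, problem \eqref{eq:feapprox} is well posed in $W^1_0(L^\hp(\Omega))$, equivalently there is a uniform discrete inf-sup constant at exponent $\hp$. Picking $w_h \in \polX_h$ with $w_h \to \hu$ in $W^1_0(\Lp)$ (Cl\'ement interpolant), the triangle inequality reduces matters to $\|\GRAD(w_h - u_h)\|_{\bL^{\hp}}$, which the inf-sup controls by $\sup_{\zeta_h} \int_\Omega \scl A_h\GRAD(w_h-u_h),\GRAD\zeta_h\scr / \|\GRAD\zeta_h\|_{\bL^{\hq}}$. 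Expanding exactly as in \eqref{eq:coeffapprox} gives two terms, $\int_\Omega \scl A_h(\GRAD w_h - \GRAD\hu),\GRAD\zeta_h\scr$ and $\int_\Omega \scl (A_h - A)\GRAD\hu,\GRAD\zeta_h\scr$; both are bounded by a H\"older split with exponent $r = (\tfrac1{\hp} - \tfrac1p)^{-1} \in (1,\infty)$ (finite because $\hp < p$), using boundedness of $\{A_h\}$ in $L^r(\Omega)$ and $\|A_h - A\|_{L^r}\to 0$ from Lemma~\ref{lem:approxbmo}, together with $w_h \to \hu$ in $W^1_0(\Lp)$. Both tend to zero, yielding the claim.

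The main obstacle, and the only place genuinely new reasoning is needed, is the very first step: converting well-posedness at $p<2$ into the a priori estimate at the conjugate exponent $q>2$ via the symmetry of $A_h$ and the duality characterization of the discrete inf-sup condition. Everything downstream --- the compactness/weak-limit identification and the inf-sup bootstrap to strong convergence in the weaker norm --- is a word-for-word transcription of the arguments already carried out for $p\geq 2$, since none of those arguments exploited the sign of $p-2$; only the source of the a priori bound changes. One should be slightly careful that the constant $\vare^\star$ quoted here is the one that makes \emph{both} exponents $p$ and $q$ (hence the interval $(2-\vare^\star, 2+\vare^\star)$) fall within the range where Theorem~\ref{thm:apriori} applies, but since that theorem's $\vare^\star$ already depends only on $|A|_\bmo$ and the interval it describes is symmetric about $2$, no loss occurs.
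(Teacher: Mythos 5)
Your overall plan --- obtain existence, uniqueness and the uniform bound via inf-sup/duality exploiting the symmetry of $A_h$ and the $p\geq 2$ theory, then repeat the arguments of Proposition~\ref{prop:wconv} and Proposition~\ref{prop:sconv} verbatim --- is exactly the route the paper takes (the paper merely cites ``a standard application of the inf-sup theory'' and omits the details). However, the one step you yourself identify as the place where new reasoning is needed is, as written, circular. To prove the bound $\| \GRAD \zeta_h \|_{\bL^q} \leq C \sup_{w_h} \int_\Omega \scl A_h \GRAD w_h, \GRAD \zeta_h \scr \diff x / \| \GRAD w_h \|_{\bL^p}$ you propose to take $w_h$ solving \eqref{eq:feapprox} with datum $|\GRAD \zeta_h|^{q-2}\GRAD \zeta_h$ and to ``invoke Theorem~\ref{thm:apriori}''. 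But that datum lies only in $\bLp$ with $p=q'<2$ (since $\bigl| |\GRAD\zeta_h|^{q-2}\GRAD\zeta_h \bigr|^p = |\GRAD\zeta_h|^{q}$), and what you need to control the denominator is $\|\GRAD w_h\|_{\bL^p} \leq C \|\GRAD\zeta_h\|_{\bL^q}^{q-1}$, i.e.\ precisely the uniform discrete a priori estimate for $\bL^p$ data with $p<2$ that the theorem under proof asserts; Theorem~\ref{thm:apriori} only covers data in $\bL^{\tilde p}(\Omega)$ with $\tilde p\geq 2$ and bounds the $\bL^{\tilde p}$ norm of the gradient, so it cannot be invoked here. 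The displayed inf-sup is also not literally ``the content'' of Theorem~\ref{thm:apriori}: passing from an a priori estimate valid for all $\bL^q$ data to an inf-sup over \emph{all} discrete functions requires an extra step.

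Two standard repairs close the gap. (i) Abstract duality: given $\zeta_h$, the functional $v\mapsto \int_\Omega \scl A_h\GRAD\zeta_h,\GRAD v\scr \diff x$ on $\polX_h$, of dual norm $M$ with respect to $\|\GRAD\cdot\|_{\bL^p}$, extends by Hahn--Banach to $W^1_0(\Lp)$ with the same norm and is represented by some $\bF\in\bL^q(\Omega)$ with $\|\bF\|_{\bL^q}\leq CM$; then $\zeta_h$ is the discrete solution with datum $\bF$, Theorem~\ref{thm:apriori} at exponent $q\geq 2$ gives $\|\GRAD\zeta_h\|_{\bL^q}\leq CM$, and the symmetry of $A_h$ together with the finite-dimensional identity between the inf-sup constants of a form and of its transpose yields the inf-sup at $p$, hence existence, uniqueness and the bound. (ii) More in the spirit of the paper: rerun the proof of Theorem~\ref{thm:apriori} directly as an inf-sup proof. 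Given $\zeta_h$, apply Theorem~\ref{thm:discHodge} in $\bL^p(\Omega)$ to $\bs_h=|\GRAD\zeta_h|^{q-2}\GRAD\zeta_h=\GRAD\phi_h+\bg_h$ and take $w_h=\phi_h$ in the sup; then $\|\GRAD\phi_h\|_{\bL^p}\leq C\|\GRAD\zeta_h\|_{\bL^q}^{q-1}$ by \eqref{eq:W1pstab}, while by symmetry $\int_\Omega\scl A_h\GRAD\phi_h,\GRAD\zeta_h\scr\diff x = \int_\Omega\scl A_h\GRAD\zeta_h,\bs_h\scr\diff x - \int_\Omega\scl A_h\GRAD\zeta_h,\bg_h\scr\diff x \geq \bigl(\alpha - C|q-2|\,|A|_\bmo\bigr)\|\GRAD\zeta_h\|_{\bL^q}^q$, using Corollary~\ref{cor:boundgh} and Corollary~\ref{col:boundlh} at exponent $q$; no $p<2$ estimate is ever used. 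With either repair, the rest of your argument (weak-limit identification and strong convergence in $W^1_0(L^\hp(\Omega))$) transfers exactly as you describe, and your remark about taking $\vare^\star$ so that both conjugate exponents fall in the admissible range is the right bookkeeping.
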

\begin{proof}
The proof of existence, uniqueness and a priori estimate is a standard application of the inf-sup theory \cite{MR2648380}. The weak convergence repeats the arguments of Proposition~\ref{prop:wconv} and the strong convergence those of Proposition~\ref{prop:sconv}. For brevity we skip the details.
\end{proof}

\section{About the possibility of establishing convergence rates}
\label{sec:convrates}

Let us comment on the possibility of establishing convergence rates for $u_h$, at least in $W^1_0(L^\hp(\Omega))$, where strong convergence takes place. This is useful to understand the critical nature of the $\BMO$ coefficients in the theory and approximation of elliptic equations. To make the discussion simple and focus on the essential difficulties we will assume in this section, and this section only, that the family of meshes $\{\calT_h\}_{h>0}$ is quasiuniform so that $h$ can be identified with the mesh size.

The heart of the matter lies in the proof of Proposition~\ref{prop:sconv}. The steps are rather standard, first the error is split into the interpolation and approximation errors
\[
  \| \GRAD ( \hu - u_h ) \|_{\bL^\hp} \leq \| \GRAD (\hu - w_h ) \|_{\bL^\hp} + \| \GRAD ( w_h - u_h ) \|_{\bL^\hp},
\]
for a suitably chosen $w_h \in \polX_h$. Next we must bound each one of these terms. The first one --- the interpolation error --- is not at all related to the numerical scheme but rather to the smoothness of the solution and the approximation properties of the finite element space $\polX_h$. For this reason, if one were to assume that $\hu \in W^{1+s}(L^\hp(\Omega)) \cap W^1_0(L^\hp(\Omega))$ for some $s>0$, we can conclude that
\[
  \| \GRAD (\hu - w_h ) \|_{\bL^\hp} \leq C h^s,
\]
for a constant independent of $h$. Although this is standard, we must reiterate that such an estimate is independent of the problem in question and a similar smoothness assumption on the solution $\hu = \ue$ must be made (or proved) if rates of convergence are desired in any other problem, say one with bounded or even smooth coefficients.

The second term --- the approximation error --- on the other hand encodes how well our numerical scheme reproduces the exact problem and, in our case, is of more interest. The proof of Proposition~\ref{prop:sconv} yields
\[
  \| \GRAD ( w_h - u_h ) \|_{\bL^\hp} \leq C \left(
    \| A \|_{L^r} \| \GRAD( w_h - \hu ) \|_{\bL^p} + \| A - A_h \|_{L^r} \| \GRAD \hu \|_{\bL^p}
  \right),
\]
for some $r>1$. The first term on the right hand side of this inequality can be treated with a similar argument as the interpolation error. The second term is an unavoidable consistency error (see Remark~\ref{rem:LM}) and to conclude a rate of decay for it one must study how well the coefficient $A$ can be approximated by a piecewise constant function.

The so-called Campanato spaces $\calL^\lambda(L^r(\Omega))$ \cite[Chapter 4]{MR0482102} provide the right tool to quantify the rate of approximation by piecewise constants. They are defined as
\[
  \calL^\lambda(L^r(\Omega)) = \left\{ w \in L^r(\Omega): [w]_{\lambda,r} < \infty \right\}
\]
with semi-norm
\[
  [w]_{\lambda,r}^r = \sup_{x \in \Omega, \ \rho>0}
    \rho^{-\lambda} \int_{B(x,\rho)\cap \Omega} | w - w_{B(x,\rho)}|^r \diff z,
\]
where $B(x,\rho)$ denotes the ball of radius $\rho$ centered at $x$. It is well known \cite[\S4.7]{MR0482102} that $\calL^d(L^r(\Omega)) = \BMO$ for any $r \in [1,\infty)$ and that \cite[Theorem~4.6.1]{MR0482102} if $d < \lambda \leq d+r$ we have $\calL^\lambda(L^r(\Omega)) = C^{0,\beta}(\bar\Omega)$ for $\beta = \tfrac{\lambda - d}r$. Moreover, $\calL^\lambda(L^r(\Omega))$ with $\lambda > d + r$ contains only constant functions.

With this functional setting at hand we see that the only plausible way to assert a rate of convergence for $A_h$ would be as follows:
\begin{align*}
  \| A - A_h \|_{L^r}^r &=
    \sum_{K \in \calT_h } \int_K |A - A_h|^r \diff z
    = \sum_{K \in \calT_h } \int_K |A - A_h|^r \diff z \frac{h^{d+\delta}}{h^{d+\delta}} \\
  & \leq C [A]_{d+\delta,r}^r \sum_{K \in \calT_h} h^{d+\delta}
    \leq C [A]_{d+\delta,r}^r |\Omega| h^\delta,
\end{align*}
for some $\delta > 0$. However, as the aforementioned embeddings show, this already implies that $A \in C^{0,\beta}(\bar\Omega)$ for some $\beta>0$ and we go back to the classical case of smooth coefficients. A similar assumption, with analogous consequences would be to assume that the coefficients $A$ lie in a slightly better space than $W^1(L^d(\Omega))$, see \cite{MR1005611} and \cite[Theorem 5.22]{MR2777537}.

In summary, the assumption $A \in \BMO$ is critical. Weaker assumptions render us unable to even assert existence of solutions while even the slightest stronger assumption brings us back to the classical case, where standard and well known techniques apply.

\bibliographystyle{plain}
\bibliography{biblio}

\end{document}